\newtheorem{theorem}{Theorem}[section]
\newtheorem{lemma}[theorem]{Lemma}
\newtheorem{proposition}[theorem]{Proposition}
\newtheorem{definition}[theorem]{Definition}
\newtheorem{corollary}[theorem]{Corollary}
\newtheorem{conjecture}[theorem]{Conjecture}
\newtheorem{question}[theorem]{Question}
\newtheorem{problem}[theorem]{Problem}
\newcommand{\Sym}{\mathop{\mathrm{Sym}}}
\newcommand{\Alt}{\mathop{\mathrm{Alt}}}
\newcommand{\PSL}{\mathop{\mathrm{PSL}}}
\newcommand{\Aut}{\mathop{\mathrm{Aut}}}
\newcommand{\Cay}{\mathop{\mathrm{Cay}}}
\def\nor#1#2{{\bf N}_{{#1}}({{#2}})}
\begin{document}

\title[Partitions derangement graph]{On triangles in  derangement graphs}  

\author[K.~Meagher]{Karen Meagher}
\address{Department of Mathematics and Statistics, University of Regina, Regina, Saskatchewan
S4S 0A2, Canada}
\email{meagherk@uregina.ca}

\author[A.~Sarobidy Razafimahatratra]{Andriaherimanana Sarobidy Razafimahatratra}
\address{Department of Mathematics and Statistics, University of Regina, Regina, Saskatchewan
S4S 0A2, Canada}
\email{sarobidy@phystech.edu}

\author[P.~Spiga]{Pablo Spiga}
\address{Dipartimento di Matematica e Applicazioni, University of Milano-Bicocca, Via Cozzi 55, 20125 Milano, Italy} 
\email{pablo.spiga@unimib.it}

\begin{abstract}
  Given a permutation group $G$, the derangement graph $\Gamma_G$ of $G$ is the Cayley graph with connection set the set of all derangements of $G$.  We prove that,  when $G$ is transitive of degree at least $3$, $\Gamma_G$ contains a triangle. 

The motivation for this work is the question of how large can be the ratio of the independence number of $\Gamma_G$ to the size of the stabilizer of a point in $G$. We give examples of transitive groups   where this ratio is maximum. 
  
\keywords{derangement graph, independent sets, Erd\H{o}s-Ko-Rado Theorem, Symmetric Group, multipartite graphs}
\end{abstract}

\subjclass[2010]{Primary 05C35; Secondary 05C69, 20B05}

\maketitle

\section{Introduction}\label{sec:intro}

The Erd\H{o}s-Ko-Rado theorem is a fundamental result of extremal combinatorics.

\begin{theorem}[\cite{erdos1961intersection}]
Let $\Omega$ be a set of cardinality $n$, let $k$ be a positive integer with $2k\le n$ and let $\mathcal{F}$ be a family of $k$-subsets of $\Omega$ with $A\cap B\ne\emptyset$, for all $A,B\in\mathcal{F}$. Then $|\mathcal{F}|\leq \binom{n-1}{k-1}$. Moreover, provided $2k<n$, equality holds if and only if all elements in $\mathcal{F}$ contain a fixed element of $\Omega$.\label{theorem:EKR-set}
\end{theorem}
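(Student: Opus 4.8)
The plan is to prove the bound and the uniqueness statement in one stroke using Katona's cyclic-permutation argument, which is elementary and fits the counting spirit of the subject. First I would arrange the $n$ elements of $\Omega$ around a circle; there are $(n-1)!$ such cyclic orderings. Call a $k$-subset an \emph{arc} of a given ordering if its elements occupy $k$ consecutive positions, so each ordering has exactly $n$ arcs. The crucial local lemma is that, for a fixed cyclic ordering and assuming $2k\le n$, the intersecting family $\mathcal F$ can contain at most $k$ of these $n$ arcs. To see this I would fix one arc $A\in\mathcal F$, placed at positions $0,\dots,k-1$, and note that any other arc meeting $A$ has its starting position among the $2k-2$ positions flanking $A$. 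These split into $k-1$ pairs $\{[j,j+k-1],[j-k,j-1]\}$ for $j=1,\dots,k-1$, and the two arcs in each pair are disjoint; hence $\mathcal F$ can contain at most one arc from each pair, for a total of at most $1+(k-1)=k$.

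Next I would double count the incidences between members of $\mathcal F$ and cyclic orderings in which they appear as arcs. A fixed $k$-set is an arc in exactly $k!\,(n-k)!$ of the $(n-1)!$ cyclic orderings, obtained by gluing the set into a block, ordering it internally in $k!$ ways, and seating the resulting $n-k+1$ objects around the circle in $(n-k)!$ ways. Combining this count with the local lemma gives
\[
|\mathcal F|\cdot k!\,(n-k)! \;\le\; (n-1)!\cdot k,
\]
and dividing through yields $|\mathcal F|\le (n-1)!/((k-1)!(n-k)!)=\binom{n-1}{k-1}$, as required.

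The main obstacle is the uniqueness statement under the strict inequality $2k<n$. For this I would examine the equality case of the local lemma: when $2k<n$, a subfamily attaining the full count of $k$ arcs within a single cyclic ordering is forced to consist of $k$ arcs that are ``consecutive'' and hence share a common element. The delicate part is promoting this local rigidity to a global conclusion, showing that a single fixed point of $\Omega$ can be chosen uniformly across all the orderings where the bound is tight, so that every member of $\mathcal F$ contains it. This globalisation step, rather than the counting inequality, is where the genuine work lies and is precisely the place where $2k<n$ is needed (for $2k=n$ the extremal families are no longer only the stars).

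Finally, I would record an alternative argument closer to the viewpoint of the present paper: $\mathcal F$ is an independent set in the Kneser graph $K(n,k)$, whose eigenvalues are $(-1)^i\binom{n-k-i}{k-i}$, and whose least eigenvalue is $-\binom{n-k-1}{k-1}$. The Hoffman (ratio) bound then gives
\[
|\mathcal F|\;\le\;\binom{n}{k}\cdot\frac{\binom{n-k-1}{k-1}}{\binom{n-k}{k}+\binom{n-k-1}{k-1}}\;=\;\binom{n}{k}\cdot\frac{k}{n}\;=\;\binom{n-1}{k-1},
\]
with uniqueness following from an analysis of the eigenspace attached to the optimal eigenvalue. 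I would present the cyclic-permutation proof as the main argument and keep this spectral proof as a remark, since the latter connects directly to the Cayley-graph and independence-number methods used elsewhere in the paper.
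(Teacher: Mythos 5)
You should first note that the paper itself offers no proof of this statement: it is quoted as the classical Erd\H{o}s--Ko--Rado theorem with a citation to the original 1961 paper, so there is no internal argument to compare yours against and your proposal must stand on its own. On its own terms, the inequality half is correct and complete. Katona's circle argument is carried out properly: the pairing $\{[j,j+k-1],\,[j-k,j-1]\}$ of the $2(k-1)$ arcs meeting a fixed arc, the observation that the two arcs of a pair are disjoint because together they occupy $2k\le n$ consecutive positions, the count $k!\,(n-k)!$ of cyclic orderings in which a fixed $k$-set appears as an arc, and the final double count all check out; so does the ratio-bound computation in your closing remark, since $\binom{n-k-1}{k-1}\big/\bigl(\binom{n-k}{k}+\binom{n-k-1}{k-1}\bigr)=k/n$.

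The genuine gap is the ``moreover'' clause, which is part of the statement and which you do not actually prove. You reduce it to two claims: (i) if $2k<n$ and a cyclic ordering carries exactly $k$ arcs of $\mathcal{F}$, then those arcs share a common element; (ii) the resulting ordering-by-ordering centers can be glued into a single element of $\Omega$ lying in every member of $\mathcal{F}$. Claim (i) is true but requires a real argument: writing the arcs as one choice from each pair $\{B_j,C_j\}$, one must show that pairwise intersection forces every chosen $B_i$ to have index below every chosen $C_j$, whence all arcs contain the threshold position; moreover, when $2k<n<3k$ there are extra wrap-around intersections (a $B_i$ can meet a $C_j$ with $i\ge j$) that must be ruled out, so the claim is not immediate from the pairing alone. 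Claim (ii) is exactly the step you flag as ``where the genuine work lies'' and then defer, and the same deferral occurs in the spectral variant (``an analysis of the eigenspace attached to the optimal eigenvalue'' is precisely the nontrivial content of that route, and is also where $2k<n$ is needed, since for $2k=n$ non-star extremal families exist). As written, then, your proposal establishes the bound $|\mathcal{F}|\le\binom{n-1}{k-1}$ but only outlines, rather than proves, the characterization of equality.
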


Theorem~\ref{theorem:EKR-set} can be extended for various objects, including permutation groups, see for instance~\cite{6,deza1978intersection,godsil2016algebraic,10}. Given a permutation group $G$ on a set $\Omega$ and $\mathcal{F} \subseteq G$, we say that $\mathcal{F}$ is \textbf{\textit{intersecting}} if any two permutations $g,h \in \mathcal{F}$ agree on some $\omega\in \Omega$, that is, $\omega^g=\omega^h$. Given $\omega,\omega'\in \Omega$, the set $G_{\omega \rightarrow \omega'}:=\{g\in G\mid \omega^g=\omega'\}$ of all permutations in $G$ that map $\omega$ to $\omega'$ is intersecting;  we call the intersecting sets of this type the \textbf{\textit{canonical intersecting sets}}. Clearly, $G_{\omega \rightarrow \omega'}$ is either empty or a  right coset of the stabilizer $G_\omega$ of the point $\omega$ in $G$. 

A transitive group $G$ has the \textbf{\textit{Erd\H{o}s-Ko-Rado property}} or \textbf{\textit{EKR-property}} if the maximum cardinality of an intersecting family is  $\frac{|G|}{|\Omega|}$. Moreover, if equality only holds for canonical intersecting sets, then we say that $G$ has the \textbf{\textit{strict EKR-property}}.
For instance, it was proved independently by Cameron and Ku~\cite{6} and by Larose and Malvenuto~\cite{10} that $G=\Sym(\Omega)$ has the strict EKR-property. However, there are many  interesting permutation groups that have the EKR-property but not the strict EKR-property, see for example~\cite{MeSp,spiga}.

Given a permutation group $G$ on $\Omega$, we let $\mathcal{D}$ be the set of all derangements of $G$, where a \textbf{\textit{derangement}} is a  permutation without fixed points. The \textbf{\textit{derangement graph}} of $G$ is the graph $\Gamma_G $ whose vertex set is the set $G$ and whose edge set consists of all pairs $(h,g)\in G\times G$ such that $gh^{-1}\in \mathcal{D}$. In particular, $\Gamma_G$ is the Cayley graph of $G$ with connection set $\mathcal{D}$. Note that $\Gamma_G$ is  loop-less because $\mathcal{D}$ does not contain the identity element of $G$ and is a simple graph because $\mathcal{D}$ is inverse-closed, that is, $\mathcal{D}=\{g^{-1}\mid g\in \mathcal{D}\}$. With this terminology, an intersecting family of $G$ is an \textbf{\textit{independent set}} or \textbf{\textit{coclique}} of $\Gamma_G$, and vice versa. 

Let $\omega\in \Omega$ with $G_{\omega}$ having maximum cardinality among point stabilizers. The \textbf{\textit{intersection density}} or \textbf{\textit {EKR-density}} of the intersecting family $\mathcal{F}$ of $G$  is defined by
\[
\rho(\mathcal{F}):= \frac{|\mathcal{F}|}{| G_{\omega}|}.
\]
The \textbf{\textit{intersection density}} of $G$ is 
\[
\rho(G):= \max\left\{\rho(\mathcal{F})\ \mid \mbox{$\mathcal{F}\subseteq G$,\,$\mathcal{F}$ is intersecting} \right\}.
\]
This parameter was defined by Li, Song and Pantangi in~\cite{li2020ekr} (actually, in \cite{li2020ekr}, our $\rho(G)$ was denoted by $\rho(G,\Omega)$). In this paper we consider transitive groups, so $|G_{\omega}| = |G|/|\Omega|$.
Observe that a transitive group $G$ has the EKR-property if and only if $\rho(G) = 1$.  As Li, Song and Pantangi point out, this ratio is a measure of how far a group is from satisfying the EKR-property. For any group $G$, $\rho(G) \geq 1$. Further, if $G$ is transitive and $|\Omega|\ge 2$, then by Jordan's theorem $G$ has a derangement, which implies $\rho(G) < |\Omega|$.

Our main motivation in this paper is to find groups that are very far from having the EKR-property, that is, groups with large intersection density.  Li, Song and Pantangi conjectured in \cite{li2020ekr} that the intersection density is at most $\sqrt{|\Omega|}$ and they constructed a transitive group $G$ with $\rho(G)\sim \sqrt{|\Omega|}$. In Theorem~\ref{thrm:enu}, we give examples of transitive groups with intersection density larger than what Li, Song and Pantangi have conjectured in~\cite{li2020ekr}.


\begin{definition}{\rm 
	Let $n\geq 2$. Define 
	  $$\mathcal{I}_n := \left\{ \rho(G) \mid \mbox{$G$ transitive of degree $n$}\right\}.$$ 
	The set $\mathcal{I}_n$ is a finite set of rational numbers, so we define $I(n)$ to be the maximum 
	value in $\mathcal{I}_n$.
}
\end{definition}
	
With these definitions we can state our motivating general problems.
	
\begin{problem}	  \hfill
	\begin{enumerate}[(i)]
		\item For a given $n$, can we determine $\mathcal{I}_n$?
		\item For a given $n$, can we determine $I(n)$?
		\item If $I(n)$ is larger than $1$, can we determine the structure of the transitive groups $G$ of degree $n$ with $\rho(G)=I(n)$?
	\end{enumerate}	
		\label{prob:main}
\end{problem}


If $G$ is a transitive group of degree $n\ge 2$, then by Jordan's theorem $G$ has a derangement, so $\Gamma_G$ has at least one edge. Since $\Gamma_G$ is vertex transitive, the clique-coclique bound implies that $\alpha(\Gamma_G) \leq |G|/2$ and hence $\rho(G) \leq n/2$.  Moreover, $\rho(G)=n/2$ if and only if $\Gamma_G$ is 
bipartite. The major result in this paper is the surprising fact that the derangement graph for a transitive group can never be bipartite, unless the transitive group has degree $\le 2$.

\begin{theorem}\label{thrm:main1}
	Let $G$ be a transitive permutation group on $\Omega$. If the derangement graph of $G$ is bipartite, 
	then $|\Omega| \le 2$. 
\end{theorem}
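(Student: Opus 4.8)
The plan is to prove the contrapositive: if $|\Omega|\ge 3$ then $\Gamma_G$ contains an odd cycle, and hence is not bipartite. In fact I will aim for the most convenient odd cycle, a triangle. Writing $\mathcal{D}$ for the set of derangements, three pairwise-adjacent vertices of $\Gamma_G=\Cay(G,\mathcal{D})$ may, after translating one of them to the identity, be taken to be $1,x,xy$ with $x,y\in\mathcal{D}$; the three edge conditions then collapse to the single requirement $xy\in\mathcal{D}$, since the edges $\{1,x\}$ and $\{x,xy\}$ are automatic (here $x\in\mathcal{D}$ and $xyx^{-1}$ is a conjugate of $y\in\mathcal{D}$, and $\mathcal{D}$ is conjugation-closed). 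So the whole problem becomes: \emph{for $|\Omega|\ge 3$, produce two derangements whose product is again a derangement.} It is convenient to record the global obstruction through the normal subgroup $N:=\langle\mathcal{D}\rangle\trianglelefteq G$ (normal because $\mathcal{D}$ is closed under conjugation): the connected components of $\Gamma_G$ are the cosets of $N$, so $\Gamma_G$ is bipartite if and only if there is a homomorphism $\phi\colon N\to\mathbb{Z}/2\mathbb{Z}$ with $\phi(\mathcal{D})=\{1\}$, and such a $\phi$ is unique since $\mathcal{D}$ generates $N$. A triangle $1,x,xy$ violates this, because it would force the impossible identity $1+1=\phi(x)+\phi(y)=\phi(xy)=1$ in $\mathbb{Z}/2\mathbb{Z}$.

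Second, I would dispose of the easy case using powers of a single derangement. If some derangement $d$ has no $2$-cycle — equivalently, every cycle of $d$ has length at least $3$ — then $d^{2}$ is again fixed-point-free, so $1,d,d^{2}$ is a triangle. This already covers every derangement of odd order, and it settles all \emph{regular} groups: in the regular representation every non-identity element is a derangement, so either some element has order at least $3$ (and powers give a triangle) or every non-identity element is an involution, forcing $G$ to be an elementary abelian $2$-group of order $n\ge 4$; choosing two distinct involutions $s,t$ one has $st\in\mathcal{D}$, and $1,s,st$ is a triangle.

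Third — and this is where the real work lies — one is left with the case in which \emph{every} derangement has a $2$-cycle. Here I would first treat the extremal situation in which every derangement is a fixed-point-free involution. Such an involution is a perfect matching of $\Omega$, and a short computation in the dihedral group $\langle s,t\rangle$ shows that the product of two distinct fixed-point-free involutions $s,t$ fixes a point exactly when the two matchings share an edge (if $\omega^{st}=\omega$ then $\omega^{s}=\omega^{t}$, forcing a common edge, and conversely). Thus a triangle exists unless all derangement-matchings pairwise share an edge, and I would rule this out for $n\ge 4$: starting from a derangement with a $2$-cycle $\{\alpha,\beta\}$ and using transitivity to produce a conjugate derangement whose matching disagrees at $\alpha$, I would run an extremal/counting argument on the family of matchings to force two of them to be edge-disjoint. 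The general case, where a derangement with a $2$-cycle may also carry longer cycles, I would reduce to producing, from a derangement $d$ with a $2$-cycle, a conjugate $d^{g}$ with $d\,d^{g}$ fixed-point-free.

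I expect this last step to be the main obstacle, and for a completely uniform treatment one probably has to appeal to the finer structure of transitive groups. It is worth stressing why no softer argument suffices: one cannot simply locate a derangement inside a well-chosen normal subgroup, because a nontrivial normal subgroup of a transitive group can be entirely derangement-free. A concrete witness is $V_4\trianglelefteq \Sym(4)$ acting on the six edges of the complete graph on four vertices: each nontrivial element of $V_4$ fixes two edges, so $V_4$ contains no derangement, yet this degree-$6$ action certainly does have triangles (coming from derangements outside $V_4$). Hence the triangle must genuinely be exhibited rather than inferred from the subgroup lattice.
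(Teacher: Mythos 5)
Your opening reductions are correct: a triangle exists if and only if there are $x,y\in\mathcal{D}$ with $xy\in\mathcal{D}$; a derangement with no $2$-cycle yields the triangle $\{1,d,d^2\}$; regular groups follow; and two fixed-point-free involutions multiply to a derangement exactly when their matchings are edge-disjoint. But everything after that is a plan, not a proof, and the part you leave open is not a residual case --- it is the entire theorem. Indeed, under the bipartite hypothesis every derangement $d$ satisfies $\phi(d)=1$ in your $\mathbb{Z}/2\mathbb{Z}$ quotient, so $d^2$ lies in the non-derangement class and hence has a fixed point; since $d$ has none, $d$ must contain a $2$-cycle. So for the statement actually at issue, your ``easy case'' never occurs, and the two steps you defer (the ``extremal/counting argument'' forcing two derangement-matchings to be edge-disjoint, and producing a conjugate $d^g$ with $dd^g$ fixed-point-free) are exactly where all of the difficulty is concentrated. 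Neither is carried out, and no argument is offered for why either should succeed.

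The paper's proof shows why this gap is essential rather than a matter of missing a trick. In a minimal counterexample the non-derangements form an index-$2$ normal subgroup $H$ containing every point stabilizer, with two orbits $\Delta,\Delta'$, and the failure of a triangle (or of non-bipartiteness) is equivalent to the normal $2$-covering condition $H=\bigcup_{h\in H}H_\omega^h\cup\bigcup_{h\in H}H_{\omega'}^h$, where moreover $H_\omega$ and $H_{\omega'}$ are conjugate by an element of $G$, i.e.\ by an automorphism of $H$. Ruling this out for $|\Omega|\ge 3$ occupies the whole of Section~3: one shows $H$ acts primitively and faithfully on both orbits, has a unique minimal normal subgroup which must be non-abelian simple, and then one invokes the CFSG-based classification of normal $2$-coverings of simple groups together with tables of maximal subgroups (Liebeck--Praeger--Saxl, Bray--Holt--Roney-Dougal, Bubboloni et al.), plus explicit matrix computations in $\mathrm{P}\Omega_8^+(q)$. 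No elementary counting argument on families of matchings is known to replace this, and your own closing sentence (``one probably has to appeal to the finer structure of transitive groups'') concedes as much. Note also that you are attempting the strictly stronger triangle statement (the paper's Theorem on triangles), which the paper proves only \emph{after} the bipartite theorem and with yet more classification input; going for it directly makes your task harder, not easier. As it stands, the proposal is a correct reformulation of the problem together with an acknowledgement that the problem remains unsolved.
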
 

We actually push this a little further and we prove the following.

\begin{theorem}\label{thrm:main2}
Let $G$ be a transitive permutation group on $\Omega$. If $|\Omega|\ge 3$, then the derangement graph of $G$ contains a triangle.
\end{theorem}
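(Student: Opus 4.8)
The plan is to work inside the group rather than the graph. Because $\Gamma_G$ is the Cayley graph of $G$ on the inverse-closed connection set $\mathcal{D}$, a triangle is the same thing as a triple of derangements $d_1,d_2,d_3$ with $d_1d_2d_3=1$; equivalently, \emph{it suffices to exhibit two derangements whose product is again a derangement}. The cheapest way to do this uses a single element: if $d$ is a derangement and $d^2$ is also a derangement, then $1,d,d^2$ are three distinct vertices, and $d=d\cdot 1^{-1}$, $d^2=d^2\cdot 1^{-1}$ and $d=d^2\cdot d^{-1}$ all lie in $\mathcal{D}$, so they form a triangle. This already settles a large case: if $d$ has \emph{odd} order then $d^2$ has exactly the same cycle type as $d$ (squaring is a bijection on each odd-length cycle), hence $d^2$ is again fixed-point-free. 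Thus the first reduction is that \textbf{it is enough to produce a derangement of odd order}; more generally, any derangement $d$ with $\gcd(2,|d|)=1$ does the job.

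Second, I would reduce to primitive groups by induction on $|\Omega|$ via block systems. If $\mathcal{B}$ is a $G$-invariant partition and $\bar G=G^{\mathcal{B}}$ the induced group, then any element of $G$ lying over a block-derangement of $\bar G$ moves every block and is therefore a derangement of $\Omega$; moreover, products of block-derangements are block-derangements. Hence a triangle $\bar a,\bar b,\overline{ab}$ in $\Gamma_{\bar G}$ lifts to derangements $a,b,ab$ of $G$, that is, a triangle of $\Gamma_G$. Consequently, if $G$ admits a block system with at least three blocks, the induction hypothesis applied to $\bar G$ (of strictly smaller degree at least $3$) finishes the case. This leaves the primitive groups together with the groups having a system of exactly two blocks. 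In the two-block case the elements swapping the blocks are automatically derangements, and writing any element of the block-stabiliser $N$ as a product of two swappers shows that $\Gamma_G$ has a triangle as soon as $N$ contains a single derangement of $\Omega$; the alternative, that every derangement of $G$ swaps the two blocks, has to be excluded separately when $|\Omega|\ge 3$.

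For the primitive case I would appeal to the known existence results for derangements of prescribed order. By the Fein--Kantor--Schacher theorem, every transitive group of degree at least $2$ contains a derangement of prime-power order; when that prime is odd, the first paragraph already yields a triangle. The whole difficulty therefore concentrates on the groups in which \emph{every} derangement has $2$-power order. Here the coprime-power trick breaks down---a derangement possessing a $2$-cycle squares to an element with fixed points---and the triangle must be found directly, for instance from a derangement of order at least $4$ all of whose cycles have length at least $4$ (whose square is then a derangement), or from a pair of commuting fixed-point-free involutions with fixed-point-free product. \textbf{This $2$-power-order case is the main obstacle}, and I expect it to be dispatched using the classification of finite primitive groups to control the possible cycle types; the incidental transitive $2$-groups are already handled by the block reduction, which pushes them down to degree $4$, where $C_4$, $C_2^2$ and $D_4$ are checked by inspection.
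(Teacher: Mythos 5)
Your skeleton is essentially the same as the paper's proof (minimal counterexample, lift triangles through block systems, isolate the two-block case, and in the primitive case exploit that a derangement of odd order $d$ yields the triangle $\{1,d,d^2\}$). But the two places where you stop are exactly where all of the work lies, so as it stands the proposal has genuine gaps rather than omitted routine details. First, in the two-block case you note that the bad situation is ``every derangement of $G$ swaps the two blocks'' and say it ``has to be excluded separately.'' That situation is precisely the statement that $\Gamma_G$ is complete bipartite with parts $N$ and $G\setminus N$, i.e.\ it is Theorem~\ref{thrm:main1} of the paper, whose proof occupies all of Section~\ref{sec:main-proof}: it runs through normal $2$-coverings, primitivity and faithfulness of the index-two subgroup on its two orbits, a reduction to a non-abelian simple socle, the Liebeck--Praeger--Saxl tables, and ultimately the Classification. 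Waving at this exclusion leaves the hardest theorem of the paper unproved.

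Second, in the primitive case your appeal to Fein--Kantor--Schacher is too weak: it only produces a derangement of prime-power order, and you correctly observe that the whole difficulty is the case where every derangement has $2$-power order --- but then you offer no argument, only the expectation that CFSG ``controls the cycle types.'' The paper resolves this with two specific deep inputs plus explicit constructions: when $|\Omega|$ is divisible by an odd prime, the Burness--Giudici classification of $2'$-elusive primitive groups reduces to product actions built from $M_{11}$ on $12$ points or ${}^2F_4(2)'$ on $2304$ points, where one exhibits an element $x$ with $x$ and $x^2$ both derangements; when $|\Omega|$ is a power of $2$, Guralnick's theorem on subgroups of prime-power index in simple groups reduces to $\Alt(\Delta)$ or $\PSL_2(q)$ with $q$ a Mersenne prime, where a product of $4$-cycles, respectively a Singer cycle, supplies the clique. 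Without naming and using results of this strength, the $2$-power case cannot be ``dispatched.'' (A small additional slip: ``products of block-derangements are block-derangements'' is false as stated --- a block-derangement times its inverse is the identity; what your lifting argument actually needs, and what is true, is that any preimage of a block-derangement is a derangement, and that a triangle may be translated to contain the identity.)
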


Using the clique-coclique bound, this result leads to the following corollary.

\begin{corollary}\label{cor:bound}
	For any $n \geq 3$, we have $I(n)\le \frac{n}{3}$.
\end{corollary}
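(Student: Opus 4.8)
The plan is to deduce the bound directly from Theorem~\ref{thrm:main2} together with the clique-coclique bound, using the standard translation between intersection density and the independence number of the derangement graph. First I would recall that for a transitive group $G$ of degree $n$ one has $|G_\omega| = |G|/n$, so that
\[
\rho(G) = \frac{\alpha(\Gamma_G)}{|G_\omega|} = \frac{n\,\alpha(\Gamma_G)}{|G|},
\]
where $\alpha(\Gamma_G)$ denotes the independence number of $\Gamma_G$, equivalently the maximum size of an intersecting family in $G$.

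Next I would invoke the clique-coclique bound. Since $\Gamma_G$ is a Cayley graph on $G$, it is vertex-transitive, and the clique-coclique bound therefore gives $\alpha(\Gamma_G)\,\omega(\Gamma_G) \le |G|$, where $\omega(\Gamma_G)$ is the clique number. By Theorem~\ref{thrm:main2}, whenever $n\ge 3$ the graph $\Gamma_G$ contains a triangle, hence $\omega(\Gamma_G)\ge 3$. Combining these two facts yields $3\,\alpha(\Gamma_G) \le \alpha(\Gamma_G)\,\omega(\Gamma_G) \le |G|$, that is, $\alpha(\Gamma_G)\le |G|/3$.

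Substituting this into the displayed expression for $\rho(G)$ gives $\rho(G) \le n/3$ for every transitive group $G$ of degree $n$ with $n\ge 3$; taking the maximum over all such $G$ then produces $I(n)\le n/3$, as claimed. There is essentially no obstacle here beyond correctly assembling the ingredients, since all the genuine difficulty has been absorbed into Theorem~\ref{thrm:main2}. The only points worth stating carefully are that the clique-coclique bound applies precisely because $\Gamma_G$ is vertex-transitive, and that replacing $\omega(\Gamma_G)$ by its lower bound $3$ is legitimate: after isolating $\alpha(\Gamma_G)$, decreasing the clique number can only weaken the inequality, so the triangle supplied by Theorem~\ref{thrm:main2} is exactly enough to force the factor of $3$.
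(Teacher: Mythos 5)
Your proposal is correct and matches the paper's own argument exactly: the paper likewise combines Theorem~\ref{thrm:main2} (existence of a triangle, so clique number at least $3$) with the clique-coclique bound for the vertex-transitive graph $\Gamma_G$ to get $\alpha(\Gamma_G)\le |G|/3$, hence $\rho(G)\le n/3$ and $I(n)\le n/3$. Your additional remarks on why replacing $\omega(\Gamma_G)$ by the lower bound $3$ is legitimate are sound but not needed beyond what the paper states.
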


In Section~\ref{sect:basic}, we present some basic results on the intersection density for transitive groups with a focus on groups with a derangement graph that is a
 complete multipartite graph or the join of several graphs. Section~\ref{sec:main-proof} is dedicated to the proof of Theorem~\ref{thrm:main1}. Section~\ref{sec:triangles} gives the proof of Theorem~\ref{thrm:main2}.
In Section~\ref{sect:counter-example}, we give examples of groups that meet the bound in Corollary~\ref{cor:bound} and other groups that have a derangement graph that is a complete bipartite graph. 
We conclude in Section~\ref{sec:future} with some conjectures and further questions.

\section{Basic Results on Intersection Density} \label{sect:basic}

In this section we state some simple results.

\begin{lemma}
Let $G$ be a permutation group.
\begin{enumerate}
\item Then $\rho(G) \geq 1$.
\item $G$ has the EKR property if and only if $\rho(G) = 1$.
\item If $G$ is $2$-transitive, then $\rho(G) = 1$.
\end{enumerate}
\end{lemma}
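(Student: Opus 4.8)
The plan is to dispatch parts (1) and (2) directly from the definitions and to prove part (3) with the spectral bound for derangement graphs. For part (1), fix $\omega\in\Omega$ with $|G_\omega|$ maximum among point stabilisers, as in the definition of $\rho(G)$. The stabiliser $G_\omega$ is itself an intersecting family, since any $g,h\in G_\omega$ satisfy $\omega^g=\omega=\omega^h$; indeed it is the canonical intersecting set $G_{\omega\to\omega}$. Therefore $\rho(G)\ge\rho(G_\omega)=|G_\omega|/|G_\omega|=1$. For part (2), I would only unwind the definitions: when $G$ is transitive every point stabiliser has order $|G|/|\Omega|$, so $\rho(G)=\bigl(\max_{\mathcal F}|\mathcal F|\bigr)/(|G|/|\Omega|)$ with the maximum over intersecting families $\mathcal F$. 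Hence $\rho(G)=1$ precisely when the largest intersecting family has size $|G|/|\Omega|$, which is the EKR-property; by part (1) this maximum is always at least $|G|/|\Omega|$, so the content is only that the bound is attained.

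For part (3) I would apply the ratio (Hoffman--Delsarte) bound to the vertex-transitive graph $\Gamma_G$. Because being a derangement is a class function, $\mathcal D$ is a union of conjugacy classes and $\Gamma_G$ is a normal Cayley graph, so its eigenvalues are $\eta_\chi=\frac{1}{\chi(1)}\sum_{g\in\mathcal D}\chi(g)$ as $\chi$ ranges over the irreducible characters of $G$. If $G$ is $2$-transitive the permutation character $\pi$ equals $\mathbf{1}+\chi$ for a single irreducible $\chi$ of degree $|\Omega|-1$, and since $\fix(g)=\pi(g)=1+\chi(g)$ an element is a derangement exactly when $\chi(g)=-1$. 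Thus the eigenvalue attached to $\chi$ is $\eta_\chi=-|\mathcal D|/(|\Omega|-1)$, while the valency of $\Gamma_G$ is $|\mathcal D|$. Substituting the least eigenvalue $\tau=-|\mathcal D|/(|\Omega|-1)$ and valency $d=|\mathcal D|$ into $\alpha(\Gamma_G)\le |G|(-\tau)/(d-\tau)$ gives $\alpha(\Gamma_G)\le|G|/|\Omega|$, and together with part (1) this forces $\rho(G)=1$.

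The one real obstacle is to verify that $\tau=-|\mathcal D|/(|\Omega|-1)$ is genuinely the \emph{least} eigenvalue of $\Gamma_G$: the ratio bound is only as sharp as the true minimum, and an eigenvalue $\eta_\psi<\tau$ for some other irreducible $\psi$ would push the estimate above $|G|/|\Omega|$. Attempting to force minimality through the positive semidefinite fixed-point matrix $F_{g,h}=\fix(gh^{-1})=\sum_{\omega\in\Omega}[\omega^g=\omega^h]$ only recovers the non-negativity of the multiplicities $\langle\pi,\chi\rangle$ and circles back to the same quantity, so this step carries the real weight. I would therefore invoke the established Erd\H{o}s--Ko--Rado theorem for $2$-transitive groups, in which exactly this eigenvalue minimality is established, as the key input.
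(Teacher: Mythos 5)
Your proposal is correct and in substance identical to the paper's proof: parts (1) and (2) are handled there as ``immediate'' (your canonical-coset argument $\rho(G)\ge\rho(G_{\omega\to\omega})=1$ is exactly the intended content), and for part (3) the paper simply cites the Erd\H{o}s--Ko--Rado theorem for finite $2$-transitive groups~\cite{meagher2016erdHos}, which is the very result your ratio-bound sketch ultimately falls back on for the least-eigenvalue claim. Your candid diagnosis is also accurate: the computation $\eta_\chi=-|\mathcal{D}|/(|\Omega|-1)$ and the substitution into the Hoffman--Delsarte bound are routine, while the assertion that this is the \emph{minimum} eigenvalue is genuinely hard (your observation that positive semidefiniteness of the fixed-point matrix only recovers nonnegativity of multiplicities is correct), and that is precisely the weight carried by the cited reference in both your argument and the paper's.
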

\begin{proof}
The first two statements  are immediate. The last statement follows from~\cite{meagher2016erdHos} in which it is proven that every 2-transitive group has the EKR-property.
\end{proof}
We can find $\mathcal{I}_n$ whenever $n$ is a prime number. 

\begin{lemma}
If $G$ is transitive of prime degree $n$, then $\rho(G) =1$ and $\mathcal{I}_n = \{1\}$.
\end{lemma}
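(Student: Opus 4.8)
The plan is to combine a structural fact about transitive groups of prime degree with the clique-coclique bound that was already invoked in the introduction. First I would observe that, since $G$ is transitive on a set $\Omega$ of prime cardinality $p=n$, the prime $p$ divides $|G|$, so by Cauchy's theorem $G$ contains an element $\sigma$ of order $p$. Acting on $p$ points, such an element can only have cycles whose length divides $p$; being of order $p$ it must contain at least one $p$-cycle, and as $|\Omega|=p$ this forces $\sigma$ to be a single $p$-cycle. In particular $\sigma$ is a derangement, and because $p$ is prime every nonidentity power $\sigma^i$ (for $1\le i\le p-1$) is again a $p$-cycle, hence also a derangement.

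Next I would use the cyclic subgroup $\langle \sigma\rangle$ to produce a large clique in $\Gamma_G$. The $p$ elements $e,\sigma,\dots,\sigma^{p-1}$ are pairwise adjacent: for distinct $i,j$ the quotient $\sigma^i(\sigma^j)^{-1}=\sigma^{i-j}$ is a nonidentity power of $\sigma$, hence a derangement, so these vertices form a clique of size $p$ in $\Gamma_G$.

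Now I would invoke the clique-coclique bound. Since $\Gamma_G$ is vertex-transitive and contains a clique of size $p$, the bound gives $\alpha(\Gamma_G)\le |G|/p$. On the other hand, any point stabilizer $G_\omega$ is a canonical intersecting set, so it is an independent set of $\Gamma_G$, and transitivity gives $|G_\omega|=|G|/p$; hence $\alpha(\Gamma_G)\ge |G|/p$. Combining the two inequalities yields $\alpha(\Gamma_G)=|G|/p=|G_\omega|$, and therefore $\rho(G)=\alpha(\Gamma_G)/|G_\omega|=1$. As this holds for every transitive group of degree $n=p$, and $\rho(G)\ge 1$ always, we conclude $\mathcal{I}_n=\{1\}$.

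The argument is short, and the only delicate point is the first step, the existence of a $p$-cycle, which I expect to be the crux. It rests on the elementary but essential observation that a permutation of prime order $p$ acting on exactly $p$ points must be a single $p$-cycle; once this is in hand, everything else is a direct application of the clique-coclique bound already used in the introduction.
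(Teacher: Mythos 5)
Your proof is correct and follows essentially the same route as the paper: both produce a regular cyclic subgroup of order $n$ (the paper takes a Sylow $n$-subgroup, which is automatically regular; you invoke Cauchy's theorem and observe that an element of prime order $p$ on $p$ points must be a $p$-cycle), note that this subgroup is a clique of size $n$ in $\Gamma_G$, and conclude via the clique-coclique bound. The Cauchy-versus-Sylow distinction is immaterial, as the resulting subgroup is the same.
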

\begin{proof}
Let $G$ be transitive of degree $n$, with $n$ a prime number, and let $P$ be a Sylow $n$-subgroup of $G$. Then $P$ is a regular group and hence it is a clique of size $n$ in $\Gamma_G$. Thus, from the clique-coclique bound, we have $\rho(G) = 1$ and $\mathcal{I}_n=\{1\}$.
\end{proof}

Our goal is to find transitive groups that have a large intersection density. We will look for groups that have a large subgroup that is intersecting. We first note that it is simple to check if a subgroup is an intersecting set.

\begin{lemma}
	Let $H$ be a permutation group. Then $H$ is intersecting if and only if it is derangement free. \label{lem:intersecting_subgroups}
\end{lemma}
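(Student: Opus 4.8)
The plan is to reduce the notion of ``agreement on a point'' to a statement about fixed points of a single group element, and then to exploit the fact that $H$ is closed under products and inverses. With the right-action convention of the paper, where $\omega^{gh} = (\omega^g)^h$, two permutations $g,h$ satisfy $\omega^g = \omega^h$ for some $\omega$ precisely when $gh^{-1}$ fixes $\omega$: applying $h^{-1}$ on the right to both sides of $\omega^g = \omega^h$ yields $\omega^{gh^{-1}} = \omega$, and every step reverses. Thus $g$ and $h$ agree on some point of $\Omega$ if and only if $gh^{-1}$ is \emph{not} a derangement. This single observation does all the work, and both implications follow by choosing the pair $(g,h)$ appropriately.

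For the forward implication, I would suppose $H$ is intersecting and take an arbitrary $x \in H$. Since $H$ is a group, the identity $\mathbf{1}$ lies in $H$, so the intersecting condition applies to the pair $x, \mathbf{1}$: they agree on some $\omega$, which forces $\omega^x = \omega^{\mathbf{1}} = \omega$. Hence $x$ has a fixed point, and as $x$ was arbitrary, no element of $H$ is a derangement, i.e.\ $H$ is derangement free.

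For the converse, I would suppose $H$ is derangement free and take any $g,h \in H$. Closure under multiplication and inversion gives $gh^{-1} \in H$, so $gh^{-1}$ is not a derangement and therefore fixes some $\omega$; by the observation above, $g$ and $h$ agree on $\omega$. Since $g,h$ were arbitrary, $H$ is intersecting.

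The argument is genuinely short, so I do not expect any serious obstacle. The one point requiring care is the bookkeeping in the translation between ``$g$ and $h$ agree on $\omega$'' and ``$gh^{-1}$ fixes $\omega$'', where the right-action convention must be applied consistently. The essential structural input is simply that $H$ is a \emph{group}: the presence of the identity supplies the forward direction, while closure under the group operations supplies the converse. Without the group hypothesis the equivalence would fail, so this is exactly where the assumption is used.
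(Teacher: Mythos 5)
Your proof is correct and follows essentially the same route as the paper's: the forward direction uses that every element of $H$ intersects the identity and hence has a fixed point, and the converse uses closure of $H$ under products and inverses so that $gh^{-1}$ lies in $H$ and is therefore not a derangement. The extra bookkeeping you include, translating ``$g$ and $h$ agree on $\omega$'' into ``$gh^{-1}$ fixes $\omega$'' under the right-action convention, is implicit in the paper's definitions but stated correctly in your write-up.
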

\begin{proof}
	If $H$ is intersecting, then each $h \in H$ intersects the identity element, and hence  has a fixed point.	
	Conversely, if $H$ is derangement free, then for any $g, h\in H$ the element $gh^{-1}$ is in $H$, so is not a derangement. Thus $g$ and $h$ are intersecting.
\end{proof}

This result can be translated to a statement about the intersection density.

\begin{corollary}
If $G$ is transitive of degree $n$ with a derangement-free subgroup $H$, then $\rho(G) \geq \frac{n}{[G:H]}$.
\end{corollary}

\begin{definition}{\rm
Let $G$ be a transitive permutation group on $\Omega$ and let $\omega\in \Omega$. Since the action of $G$ on $\Omega$ is transitive, we can write the set of elements in $G$ that fix at least one point by 
$\cup_{g\in G}G_\omega^g$. So the set of derangements in $G$ is the set
\begin{equation}\label{eq:second}
\mathcal{D}=G\setminus \bigcup_{g\in G}G_\omega^g.
\end{equation}
Define $H_G$ to be the subgroup generated by the elements of $G$ that fix at least one point, that is,
\begin{align}\label{eq:HG}
H_G &:= \left\langle \bigcup_{g\in G}G_\omega^g \right\rangle=\langle G\setminus\mathcal{D}\rangle.
\end{align}
}
\end{definition}
If $H_G$ is a proper subgroup of $G$, then we can get more information about the structure of $G$. First, we recall the definition for a block of imprimitivity. We say that $\emptyset\ne S\subseteq \Omega$ is a \textbf{\textit{block}}  of $G$ if $S^g = S$ or $S^g\cap S = \varnothing$, for every $g\in G$. Obviously, any subset of $\Omega$ of size $1$ or $|\Omega|$ is a block; we call these \textbf{\textit{trivial blocks}}. We say $G$ is \textbf{\textit{imprimitive}} if it has non-trivial blocks; otherwise, $G$ is called \textbf{\textit{primitive}}. 

\begin{proposition}[{\cite[Proposition~7.1]{wielandt2014finite}}]\label{prop:normals}
Let $N$ be a normal subgroup of the transitive permutation group $G$. Then, the orbits of $N$ are  blocks of $G$.
In particular, if $N$ is intransitive and $N\ne 1$, then $G$ is imprimitive. 
\end{proposition}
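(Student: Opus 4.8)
The plan is to exploit the normality of $N$ to show that $G$ merely permutes the orbits of $N$ among themselves; once this is established, the fact that each orbit is a block is immediate, and the imprimitivity statement follows by a short cardinality argument. First I would fix a point $\alpha\in\Omega$ and consider its $N$-orbit $\Delta:=\alpha^N=\{\alpha^n\mid n\in N\}$, recalling that the $N$-orbits partition $\Omega$ and so any two of them are either equal or disjoint. For an arbitrary $g\in G$, the crux is the computation
\[
\Delta^g=\{\alpha^{ng}\mid n\in N\}=\{(\alpha^g)^{g^{-1}ng}\mid n\in N\}.
\]
Since $N$ is normal in $G$, conjugation $n\mapsto g^{-1}ng$ is a bijection of $N$ onto itself, so $g^{-1}ng$ ranges over all of $N$ as $n$ does. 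Hence $\Delta^g=(\alpha^g)^N$ is precisely the $N$-orbit of $\alpha^g$.

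The first assertion then follows at once: $\Delta^g$ is an $N$-orbit, and two $N$-orbits are either equal or disjoint, so either $\Delta^g=\Delta$ or $\Delta^g\cap\Delta=\varnothing$. By the definition of a block, this is exactly the statement that $\Delta$ is a block of $G$, and since $\alpha$ was arbitrary, every orbit of $N$ is a block.

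For the \emph{in particular} clause, I would first observe that the same computation shows $G$ acts transitively on the set of $N$-orbits: given orbits through $\alpha$ and $\beta$, transitivity of $G$ supplies $g$ with $\alpha^g=\beta$, whereupon $\Delta^g$ is the orbit through $\beta$. Consequently all $N$-orbits have the same cardinality. If $N\ne 1$, some nonidentity element moves a point, so at least one orbit---and therefore every orbit---has size at least $2$; and if $N$ is moreover intransitive, there is more than one orbit, forcing every orbit to have size strictly less than $|\Omega|$. Thus each $N$-orbit is a block of size strictly between $1$ and $|\Omega|$, i.e.\ a nontrivial block, and $G$ is imprimitive.

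The only genuinely substantive step is the displayed identity together with its interpretation via normality; everything else is routine bookkeeping about orbits partitioning $\Omega$. I do not expect a real obstacle here, as the result is classical, but the one point requiring care is writing the conjugation identity $\alpha^{ng}=(\alpha^g)^{g^{-1}ng}$ correctly under the right-action convention $\alpha^{xy}=(\alpha^x)^y$ used throughout the paper.
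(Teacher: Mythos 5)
Your proof is correct, and it is the standard argument for this classical fact; the paper itself gives no proof here, simply citing Wielandt's book, so there is nothing to diverge from. The conjugation identity $\Delta^g=(\alpha^g)^N$ granted by normality, followed by the observation that $G$ permutes the $N$-orbits transitively (so that $N\ne 1$ and intransitivity force each orbit to be a block of size strictly between $1$ and $|\Omega|$), is exactly the intended reasoning.
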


We can apply this to $G$ when $H_G$ is a  proper subgroup.

\begin{proposition}\label{prop:HG}
Assume $H_G$, as defined in~\eqref{eq:HG}, is a proper subgroup of $G$, then
\begin{enumerate}
\item  $H_G$ is normal, \label{normality_of_H}
\item $H_G$ is intransitive and has $[G:H]$ orbits on $\Omega$,
\item if $H_G\ne 1$, then  $G$ is imprimitive and the orbits of $H_G$ are blocks for $G$.
\end{enumerate}
\end{proposition}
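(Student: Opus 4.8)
The plan is to prove the three claims in sequence, since each builds on the previous one. Throughout, write $H = H_G = \langle G\setminus\mathcal{D}\rangle$, the subgroup generated by all non-derangements of $G$.

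First I would establish normality. The key observation is that the generating set $\bigcup_{g\in G} G_\omega^g$ is itself a union of conjugacy-class-like pieces that $G$ permutes among themselves by conjugation. Concretely, for any $x\in G$ and any generator $y = a^g \in G_\omega^g$, the conjugate $x^{-1}yx = a^{gx} \in G_\omega^{gx}$ is again a generator. Thus conjugation by any element of $G$ permutes the generating set $G\setminus\mathcal{D}$ of $H$, and hence fixes $H$ setwise. (Equivalently: the set of derangements $\mathcal{D}$ is a union of conjugacy classes, being closed under conjugation because having no fixed point is a conjugation-invariant property; therefore its complement $G\setminus\mathcal{D}$ is conjugation-invariant, and so is the subgroup it generates.) This gives claim~\eqref{normality_of_H}.

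Next, for the intransitivity in part~(2), I would argue by contradiction using the definition of $H$. If $H$ were transitive on $\Omega$, I claim $H$ would already contain every point stabilizer $G_\omega$, and in fact would equal $G$. The cleanest route is: since $H\trianglelefteq G$ is normal and contains $G_\omega$ for some $\omega$ (indeed $G_\omega\subseteq G\setminus\mathcal{D}\subseteq H$), and since for a transitive group $G = G_\omega G_\omega{}^{\!g}\cdots$ — more directly, $G$ is generated by its point stabilizers together with any transversal, and $H$ already contains all point stabilizers $G_\omega^g$ by construction — transitivity of $H$ forces $H=G$, contradicting properness. Hence $H$ is intransitive. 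For the orbit count, I would use that $H$ is normal, so by Proposition~\ref{prop:normals} its orbits form a block system; since $G$ acts transitively on the set of $H$-orbits and the block stabilizer containing $\omega$ is exactly the setwise stabilizer of the $H$-orbit of $\omega$, I would identify this block stabilizer with $H$ itself (as $H$ contains $G_\omega$ and is normal, the orbit-block stabilizer is $H$), yielding that the number of orbits is the index $[G:H]$.

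Finally, part~(3) is immediate: if $H\neq 1$ and $H$ is intransitive, then Proposition~\ref{prop:normals} applies directly to the nontrivial intransitive normal subgroup $H$ to conclude that $G$ is imprimitive with the orbits of $H$ as blocks.

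The main obstacle I anticipate is the orbit-count argument in part~(2): making rigorous the identification of the setwise stabilizer of an $H$-orbit with $H$ itself, and thereby equating the number of orbits with $[G:H]$. The subtle point is to verify that no element of $G$ outside $H$ fixes the $H$-orbit of $\omega$ setwise — that is, that the kernel of the $G$-action on the $H$-orbits is exactly $H$. I would handle this by showing that any $g\in G$ fixing the block $\omega^H$ setwise must lie in $H$: such a $g$ maps $\omega$ into $\omega^H$, so $\omega^g = \omega^h$ for some $h\in H$, whence $gh^{-1}\in G_\omega\subseteq H$ and thus $g\in H$. Once this is nailed down, the orbit–stabilizer theorem applied to the transitive $G$-action on the $[G:H]$-element set of blocks gives the count cleanly.
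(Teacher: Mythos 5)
Your proposal is correct and follows essentially the same route as the paper's proof: normality via conjugation-invariance of the generating set (equivalently, of the set of derangements), intransitivity via the Frattini-type coset argument (transitivity of $H_G$ together with $G_\omega\le H_G$ forces $H_G=G$), and part (3) by invoking Proposition~\ref{prop:normals}. The only cosmetic difference is in the orbit count: the paper divides $|\Omega|$ by the common orbit size $[H_G:G_\omega]$, whereas you apply orbit--stabilizer to the induced action of $G$ on the block system after identifying the setwise stabilizer of $\omega^{H_G}$ with $H_G$ via the same computation $gh^{-1}\in G_\omega\subseteq H_G$; both versions rest on exactly the same facts.
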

\begin{proof}
From~\eqref{eq:HG} and from the fact that the set $\bigcup_{g\in G}G_\omega^g$ is left invariant by the action of $G$ on itself by conjugation, we obtain $H_G\unlhd G$. 

Let $\omega\in \Omega$. If $H_G$ is transitive on $\Omega$, then $G=G_\omega H_G$. However, since $G_\omega\le H_G$, we deduce $G=H_G$, contradicting the fact that $H_G$ is a proper subgroup of $G$. The $H_G$-orbit containing $\omega$ has size $[H_G:G_\omega]$ and hence $H_G$ has $|\Omega|/[H_G:G_\omega]=[G:G_\omega]/[H_G:G_\omega]=[G:H_G]$ orbits.
  
The third statement follows from Proposition~\ref{prop:normals}.
\end{proof}


A graph $X$ on $n$ vertices is a \textbf{\textit{join}} of graphs if the vertices of $X$ can be partitioned into parts $\{X_1,X_2,\dots,X_k\}$, where $k\leq n$, so that every vertex in $X_i$ is adjacent to every vertex in $X_j$ for $i\neq j$. The complete $n$-partite graph $K_{\ell,\ell, \dots, \ell}$ is the join of copies of the empty graph $\overline{K_\ell}$. A join is \textbf{\textit{trivial}} if either there is only one part (so $k=1$), or each part has size one (so $k=n$).

\begin{lemma}\label{lem:caycomps}
Let $G$ be a group, let $X$ be an inverse-closed subset of $G\setminus\{1\}$ and let  $\Gamma=\Cay(G,X)$ be the Cayley graph of $G$ with connection set $X$.
The graph $\Gamma$ is a non-trivial join if and only if the set $G \backslash X$ does not generate $G$. Further, if $H := \langle G \backslash X \rangle$, then $\Cay(G, X)$ is the join of $[G:H]$ copies of $\Cay(H,X \cap H)$.
\end{lemma}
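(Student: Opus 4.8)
The plan is to analyze the complement graph $\overline{\Gamma}$, which is again a Cayley graph on $G$, and to exploit the standard fact that a graph on at least three vertices is a non-trivial join precisely when its complement is disconnected.

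First I would record that $\overline{\Gamma}=\Cay(G,Y)$, where $Y:=(G\setminus X)\setminus\{1\}$ is inverse-closed, being the complement inside $G\setminus\{1\}$ of the inverse-closed set $X$. Since adjoining or deleting the identity from a generating set does not change the subgroup it generates, we have $\langle Y\rangle=\langle G\setminus X\rangle=H$. Next I would invoke the description of the connected components of a Cayley graph: with the convention $g\sim h\iff gh^{-1}\in Y$, the neighbours of a vertex $g$ are exactly the elements $yg$ with $y\in Y$, so the component of $g$ is the right coset $Hg$. Hence $\overline{\Gamma}$ has exactly $[G:H]$ connected components, namely the right cosets of $H$.

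With this in hand the equivalence is immediate: $\Gamma$ is a non-trivial join if and only if $\overline{\Gamma}$ is disconnected, that is, if and only if $[G:H]\ge 2$, that is, if and only if $H=\langle G\setminus X\rangle$ is a proper subgroup of $G$; this is exactly the statement that $G\setminus X$ does not generate $G$. The only degenerate case is $|G|=2$, where $K_2$ is not a non-trivial join, but this is excluded in all our applications, where $|G|\ge 3$. For the explicit description in the ``Further'' clause I would assume $H\ne G$ and take the parts of the join to be the $[G:H]$ cosets $Hg$, the components of $\overline{\Gamma}$, and then check three things. First, the induced subgraph $\Gamma[H]$ equals $\Cay(H,X\cap H)$: for $h_1,h_2\in H$ the product $h_1h_2^{-1}$ lies in $H$, so $h_1\sim h_2$ in $\Gamma$ iff $h_1h_2^{-1}\in X\cap H$. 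Second, right multiplication $R_g\colon x\mapsto xg$ is an automorphism of $\Cay(G,X)$ carrying $H$ onto $Hg$, so each part $\Gamma[Hg]$ is isomorphic to $\Cay(H,X\cap H)$. Third, any two vertices in distinct cosets are adjacent in $\Gamma$: if $x\in Hg_1$ and $y\in Hg_2$ with $Hg_1\ne Hg_2$, then $xy^{-1}\in H$ would force $Hg_1=Hg_2$, so $xy^{-1}\notin H\supseteq Y$ and $xy^{-1}\ne1$, whence $xy^{-1}\in X$. Combining these three observations shows $\Gamma$ is the join of $[G:H]$ copies of $\Cay(H,X\cap H)$.

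The arguments are all elementary, and I do not expect any deep obstacle; the only real care needed is bookkeeping. One must fix the Cayley-graph convention consistently, so that the components of $\overline{\Gamma}$ come out as the correct one-sided cosets and right translation is the relevant automorphism, and one must reconcile the paper's precise definition of a non-trivial join with the standard ``complement disconnected'' criterion, including the harmless $|G|=2$ exception noted above.
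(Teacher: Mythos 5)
Your proposal is correct and takes essentially the same route as the paper's proof: pass to the complement $\overline{\Gamma}=\Cay\bigl(G,\,G\setminus(X\cup\{1\})\bigr)$, observe that its connected components are exactly the cosets of $H=\langle G\setminus X\rangle$, so there are $[G:H]$ of them, and identify the subgraph induced on each part with $\Cay(H,X\cap H)$. The only differences are that you spell out the coset and right-translation bookkeeping explicitly and flag the degenerate case $|G|=2$ (where $K_2$ fails to be a non-trivial join), details the paper's terser proof leaves implicit.
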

\begin{proof}
The graph $\Gamma$ is the join of $k$ graphs if and only if the graph 
$\overline{\Gamma} =\Cay(G, G\backslash (X \cup  \{1\} ) )$ is disconnected with $k$ components. 
As $\overline{\Gamma}$ is a Cayley graph, the number of components in $\overline{\Gamma}$ is equal to $[G:H]$.
Further, the complement of any component of $\overline{\Gamma}$ is isomorphic to $\Cay(H, X \cap H)$.
\end{proof}

\begin{proposition}
	If $\Gamma_G$ is a non-trivial join, then $G$ is imprimitive.\label{prop:join}
\end{proposition}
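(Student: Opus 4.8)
The plan is to reduce the statement to a fact about the subgroup $H_G$ and then read off imprimitivity from Proposition~\ref{prop:HG}. Since $\Gamma_G=\Cay(G,\mathcal{D})$ and the set of derangements $\mathcal{D}$ is inverse-closed and does not contain the identity, Lemma~\ref{lem:caycomps} applies directly with $X=\mathcal{D}$. The first step is to observe that the subgroup $H:=\langle G\setminus\mathcal{D}\rangle$ appearing there is precisely $H_G$, by the defining equality~\eqref{eq:HG}. Hence the hypothesis that $\Gamma_G$ is a non-trivial join translates, via Lemma~\ref{lem:caycomps}, into the purely group-theoretic assertion that $H_G$ is a proper subgroup of $G$.

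Second, I would use the non-triviality to guarantee $H_G\neq 1$. The final sentence of Lemma~\ref{lem:caycomps} exhibits $\Gamma_G$ as the join of $[G:H_G]$ copies of $\Cay(H_G,\mathcal{D}\cap H_G)$, so every part of this join has exactly $|H_G|$ vertices. If $H_G=1$, then each part is a single vertex and the decomposition is the trivial join into $|G|$ singletons; thus a non-trivial join forces $1<H_G<G$.

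With $1<H_G<G$ established, the conclusion follows at once from Proposition~\ref{prop:HG}: part~(1) gives that $H_G$ is normal, and part~(3) (which requires $H_G\neq1$) gives that the orbits of $H_G$ form a non-trivial block system, so $G$ is imprimitive.

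I expect the only delicate point to be the passage from ``non-trivial join'' to ``$H_G\neq 1$'', that is, excluding the degenerate case in which $G$ is regular and $\Gamma_G$ is complete. In that situation $H_G=1$ and Proposition~\ref{prop:HG}(3) would be vacuous, so some input beyond mere properness of $H_G$ is needed; the description of the join parts as copies of $\Cay(H_G,\mathcal{D}\cap H_G)$ is exactly what closes this gap, since it pins each part size to $|H_G|$ and thereby ties the combinatorial non-triviality to the algebraic condition $H_G\neq1$.
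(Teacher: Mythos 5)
Your proof is correct and follows essentially the same route as the paper's: apply Lemma~\ref{lem:caycomps} with $X=\mathcal{D}$ to translate the non-trivial join hypothesis into $1<H_G<G$, then invoke Proposition~\ref{prop:HG} to conclude imprimitivity. In fact, your explicit justification that $H_G\neq 1$ (by noting that each part of the join from Lemma~\ref{lem:caycomps} has exactly $|H_G|$ vertices, so $H_G=1$ would make the decomposition trivial) spells out a step that the paper's one-line proof leaves implicit.
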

\begin{proof}
	Assume $\Gamma_G$ is a non-trivial join. Then, by Lemma~\ref{lem:caycomps} applied with $X:=\mathcal{D}$, $H_G$ is a non-trivial and non-identity subgroup of $G$. 
By Proposition~\ref{prop:HG}, $G$ is imprimitive.
\end{proof}

The converse of this proposition does not hold. There are imprimitive groups with a derangement graph that is not a join. We also give some characterizations of the structure of derangement graphs of imprimitive groups with respect to the subgroup $H_G$.

\begin{theorem}
	Let $G$ be a transitive permutation group on $\Omega$. 
	\begin{enumerate}
		\item  If $H_G = \{ 1 \}$, then $\Gamma_G$ is a complete graph and $G$ acts regularly. In particular, $G$ has the EKR property and $\rho(G) = 1$.
		\item If $H_G$ is a proper subgroup, then $\Gamma_G$ is a join of $[G:H_G]$ copies of $\Gamma_{H_G}$. Further $\alpha(\Gamma_G)= \alpha(\Gamma_{H_G})$ and $\rho(G) = \rho(H_G)$.
		\item If $H_G = G$, then $\Gamma_G$ is not a join.
	\end{enumerate}\label{theorem:main3}
\end{theorem}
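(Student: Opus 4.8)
The plan is to let Lemma~\ref{lem:caycomps} do the heavy lifting, applying it with connection set $X:=\mathcal{D}$ so that the subgroup $\langle G\setminus X\rangle$ appearing there is exactly $H_G$, and then to dispatch the three cases in turn.

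For statement (1) I would first argue that $H_G=\{1\}$ forces $G\setminus\mathcal{D}=\{1\}$: the identity is always a non-derangement, and if it is the only one generating a trivial $H_G$, then every non-identity element must be fixed-point-free. Hence $\mathcal{D}=G\setminus\{1\}$ and $\Gamma_G=\Cay(G,G\setminus\{1\})$ is complete. Since $G_\omega\le H_G=\{1\}$, the stabiliser is trivial, and transitivity upgrades this to a regular action. The EKR conclusion is then a one-line computation: $\alpha(\Gamma_G)=1=|G_\omega|$, so $\rho(G)=1$.

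For statement (2), Lemma~\ref{lem:caycomps} immediately gives that $\Gamma_G$ is the join of $[G:H_G]$ copies of $\Cay(H_G,\mathcal{D}\cap H_G)$, and the first thing to record is that this Cayley graph is $\Gamma_{H_G}$, because the derangements of $H_G$ acting on $\Omega$ are precisely its fixed-point-free elements, i.e.\ the members of $\mathcal{D}\cap H_G$. The equality $\alpha(\Gamma_G)=\alpha(\Gamma_{H_G})$ then follows from the general fact that an independent set of a join meets at most one part, since distinct parts are completely joined, so the independence number of a join of copies of a graph equals the independence number of that graph. The only point needing genuine care is $\rho(G)=\rho(H_G)$, where I must check that the density is read off against the same denominator on both sides. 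Here I would note $(H_G)_\omega=H_G\cap G_\omega=G_\omega$ (as $G_\omega\le H_G$), and that $H_G\unlhd G$ together with transitivity of $G$ gives $(H_G)_{\omega'}=((H_G)_\omega)^g$ for a suitable $g$, so every point stabiliser of the intransitive group $H_G$ has the common size $|G_\omega|$. Consequently $\rho(H_G)=\alpha(\Gamma_{H_G})/|G_\omega|=\alpha(\Gamma_G)/|G_\omega|=\rho(G)$.

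For statement (3), $H_G=G$ says exactly that $G\setminus\mathcal{D}$ generates $G$, so Lemma~\ref{lem:caycomps} (equivalently Proposition~\ref{prop:join}) shows $\Gamma_G$ is not a non-trivial join. The main obstacle is really confined to statement (2): one has to be confident that the density parameter transfers unchanged from the transitive group $G$ to its intransitive subgroup $H_G$, and this rests entirely on verifying that all point stabilisers of $H_G$ share the size $|G_\omega|$. Once that is in hand, the remainder is bookkeeping about joins.
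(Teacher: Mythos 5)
Your proposal is correct and follows essentially the same route as the paper: all three parts rest on Lemma~\ref{lem:caycomps} applied with $X:=\mathcal{D}$, together with the observation that an independent set of a join lies in one part, and a bookkeeping step to transfer the density from $G$ to $H_G$. Your verification that every point stabilizer of $H_G$ equals a $G$-point stabilizer (via $G_\omega\le H_G$ and normality) is just a slightly more direct phrasing of the paper's orbit-size computation, so the two arguments coincide in substance.
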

\begin{proof}
If $H_G =\{ 1\}$, then every non-identity element of $G$ is a derangement and the first statement follows.

The first part of the second statement follows from Lemma~\ref{lem:caycomps} applied with $X:=\mathcal{D}$. In particular, 
$\Gamma_G$ is the join of $[G:H_G]$ copies of $\Gamma_{H_G}$ and hence
 $\alpha(\Gamma_G)= \alpha(\Gamma_{H_G})$.  As $H_G$ is intransitive with $[G:H_G]$ orbits, every $H_G$-orbit has size $\frac{|\Omega|}{[G:H_G]}$. 	
Using the definition of intersection density, we have
\[
\rho(G) = \frac{|\Omega| \, \alpha(\Gamma_{G})}{|G|} 
            = \frac{|\Omega| \, \alpha(\Gamma_{H_G})}{[G:H_G] \, |H_G|} 
            = \rho( H_G).
\]

Finally, if $H_G =G$, then $\overline{\Gamma_G} = \Cay(G, \bigcup_{g\in G}G_\omega^g )$ is connected, so $\Gamma_G$ is not a join.
\end{proof}

We will focus on derangement graphs that are the join of empty graphs, these are exactly the complete multipartite graphs.

\begin{lemma}
	Let $G$ be transitive. Then $\Gamma_G$ is a complete multipartite graph if and only if 
	$H_G$ is a derangement-free non-identity subgroup of $G$.\label{lem:complete_multi}
\end{lemma}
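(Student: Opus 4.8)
The plan is to pass to the complement graph, since a graph is complete multipartite exactly when its complement is a disjoint union of cliques, the parts of the multipartition being the vertex sets of those cliques. Recalling that $\Gamma_G=\Cay(G,\mathcal{D})$, its complement is $\overline{\Gamma_G}=\Cay(G,S)$, where $S:=(G\setminus\mathcal{D})\setminus\{1\}$ is the set of non-identity elements of $G$ fixing at least one point of $\Omega$. Thus the whole lemma reduces to determining when $\Cay(G,S)$ is a disjoint union of cliques, and to computing the common size of those cliques.

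First I would identify the connected components of $\overline{\Gamma_G}$. The connected component of the identity in any Cayley graph $\Cay(G,S)$ is the subgroup $\langle S\rangle$, and here $\langle S\rangle=\langle G\setminus\mathcal{D}\rangle=H_G$ by the definition of $H_G$ in~\eqref{eq:HG}. Since right multiplications by elements of $G$ are automorphisms of $\Cay(G,S)$ acting transitively on the vertices, the components are exactly the right cosets $H_G a$; there are $[G:H_G]$ of them, each inducing a graph isomorphic to the one induced on $H_G$, and each of size $|H_G|$. Consequently $\overline{\Gamma_G}$ is a disjoint union of cliques if and only if the subgraph induced on $H_G$ is itself a clique.

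It then remains to decode the condition that the subgraph of $\overline{\Gamma_G}$ induced on $H_G$ is complete. Two elements $h_1,h_2\in H_G$ are adjacent in $\overline{\Gamma_G}$ precisely when $h_2h_1^{-1}\in S$, that is, when $h_2h_1^{-1}$ is a non-identity element with a fixed point; hence the induced subgraph is complete exactly when every non-identity element of $H_G$ fixes a point of $\Omega$, which is precisely the statement that $H_G$ is derangement-free (equivalently, intersecting, by Lemma~\ref{lem:intersecting_subgroups}). This yields both implications simultaneously: $\Gamma_G$ is complete multipartite if and only if $H_G$ is derangement-free, the common part size being $|H_G|$.

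The one place needing care—and where the hypothesis ``non-identity'' enters—is the degenerate boundary between a genuine complete multipartite graph and a complete graph. If $H_G=\{1\}$, then the cliques above are single vertices and $\Gamma_G$ is the complete graph $K_{|G|}$, the regular case of Theorem~\ref{theorem:main3}(1); ruling this out is exactly the requirement $H_G\neq\{1\}$, which forces each part to have size $|H_G|\ge 2$. I would also emphasize that ``derangement-free'' must be read as $H_G\cap\mathcal{D}=\emptyset$, i.e.\ that no element of $H_G$ is fixed-point-free on $\Omega$; this is a genuine condition, since $H_G$ being \emph{generated} by elements with fixed points does not by itself prevent a product of such elements from being a derangement.
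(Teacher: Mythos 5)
Your proof is correct and follows essentially the same route as the paper's: pass to the complement $\overline{\Gamma_G}=\Cay(G,\bigcup_{g\in G}G_\omega^g)$, identify the component of the identity with $H_G$ (the other components being its cosets), and observe that these components are cliques exactly when $H_G$ is derangement-free. Even your handling of the degenerate complete-graph case matches the paper, which builds the exclusion into its forward direction by assuming the parts have size $\ell\ge 2$.
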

\begin{proof}
Assume $\Gamma_G$ is the complete $k$-partite graph $K_{\ell ,\ell ,\dots , \ell}$ (with $\ell, k \geq 2$).
The graph $\overline{\Gamma_G} = \Cay(G, \cup_{g \in G} G_\omega^g)$ is the union of $k$ copies of $K_{\ell}$. The vertices in the copy of $K_\ell$ that contains the identity are the elements of the subgroup generated by $\cup_{g \in G} G_\omega^g$, so $H_G$. This implies $H_G$ is derangement free and $|H_G|=\ell>1$.

Conversely, if $H_G$ is derangement free and non-trivial, then each connected component of $\overline{\Gamma_G}$ is a complete graph. Finally, $H_G$ is a proper subgroup of $G$ and there are $[G:H]$ components of $\overline{\Gamma_G}$. This implies that $\Gamma_G$ is isomorphic to the $[G:H]$-partite graph $K_{|H_G|,\dots, |H_G|}$.
\end{proof}

\begin{corollary}
If $H_G$ is a proper subgroup of $G$ and $H_G$ is derangement free, then $\Gamma_G$ is a complete multipartite graph with $[G:H_G]$ parts and $\rho(G) = \frac{n}{[G:H_G]}$. 
\end{corollary}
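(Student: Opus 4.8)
The plan is to deduce both conclusions directly from the structural results already established, principally Lemma~\ref{lem:complete_multi} and Theorem~\ref{theorem:main3}, treating this as essentially a bookkeeping corollary. First I would settle the multipartite structure. By hypothesis $H_G$ is proper and derangement free. When $H_G\neq\{1\}$, Lemma~\ref{lem:complete_multi} applies verbatim and yields that $\Gamma_G$ is complete multipartite; the number of parts is read off from Proposition~\ref{prop:HG}\eqref{normality_of_H}--(2) (or directly from the proof of Lemma~\ref{lem:complete_multi}), namely $[G:H_G]$, each part being a coset of $H_G$ and hence of size $|H_G|$. The only situation needing a separate word is $H_G=\{1\}$: here Theorem~\ref{theorem:main3}(1) gives that $\Gamma_G$ is complete and $G$ is regular, so $\Gamma_G=K_n=K_{1,\dots,1}$ is complete multipartite with $n=[G:H_G]$ parts, consistent with the claim.

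For the density statement I would invoke Theorem~\ref{theorem:main3}(2), which gives $\rho(G)=\rho(H_G)$ whenever $H_G$ is proper, together with the formula $\rho(G)=|\Omega|\,\alpha(\Gamma_G)/|G|$ used in its proof. The key observation is that, since $H_G$ is derangement free, its derangement graph $\Gamma_{H_G}$ has no edges, so the whole of $H_G$ is an independent set and $\alpha(\Gamma_{H_G})=|H_G|$. Combining $\alpha(\Gamma_G)=\alpha(\Gamma_{H_G})=|H_G|$ with the density formula yields
\[
\rho(G)=\frac{n\,|H_G|}{|G|}=\frac{n}{[G:H_G]},
\]
as required. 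Equivalently, one reads the same value off the multipartite structure, where a maximum coclique is a single part of size $|H_G|$.

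I do not expect any serious obstacle, since every ingredient is already in hand. The only points deserving care are the degenerate case $H_G=\{1\}$ (and the attendant convention that $K_n$ counts as a complete multipartite graph with singleton parts), and the verification that \emph{derangement free} forces $\alpha(\Gamma_{H_G})=|H_G|$ exactly, rather than something smaller; once these are noted, the computation of $\rho(G)$ is immediate.
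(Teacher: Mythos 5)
Your proposal is correct and follows exactly the route the paper intends: the corollary is stated there without proof, as an immediate consequence of Lemma~\ref{lem:complete_multi} (structure and number of parts) and Theorem~\ref{theorem:main3}(2) together with the observation that a derangement-free $H_G$ has edgeless derangement graph, so $\alpha(\Gamma_G)=\alpha(\Gamma_{H_G})=|H_G|$. Your explicit treatment of the degenerate case $H_G=\{1\}$ (where $\Gamma_G=K_n=K_{1,\dots,1}$) is a careful addition, since the corollary's hypothesis of properness does not by itself exclude the identity subgroup required by Lemma~\ref{lem:complete_multi}.
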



\section{Bipartite Derangement Graphs}\label{sec:main-proof}

In this section we prove Theorem~\ref{thrm:main1}.
The proof of  Theorem~\ref{thrm:main1} is quite involved and it occupies this whole section. We divide the proof in various subsections, where in each subsection we refine our understanding of the structure of a minimal counterexample $G$ to Theorem~\ref{thrm:main1}. Then, the information in one subsection is used in the subsequent subsections in order to obtain further information, until we reach a contradiction. 

In our inductive argument, we first prove that $G$ is a \textbf{\textit{biprimitive group}} on $\Omega$, that is, $G$ admits a system of imprimitivity consisting of two blocks and the stabilizer of this system of imprimitivity acts primitively on each block. (More details are given in due course.) Then, we prove that $G$ is almost simple and finally, invoking the Classification of the Finite Simple Groups, we complete our proof. Most of our reduction uses the ideas in the work of Garonzi and  Lucchini~\cite{Lucchini}. Indeed, the authors in~\cite{Lucchini} are interested in a group-theoretic question very much related to Theorem~\ref{thrm:main1}. (Again, more details are given in due course.) Unfortunately, we were not able to use the results in~\cite{Lucchini} directly to our problem and hence we had adapted the arguments in~\cite[Section~3]{Lucchini} to our current needs.

\subsection{General notation and background results}\label{bsec:1}

\begin{definition}\label{def:1}{\rm
Let $k$ be a positive integer and let $G$ be a finite non-cyclic group. A \textbf{\textit{normal $k$-covering}} of $G$ is a family $H_1,\ldots, H_k$ of $k$ distinct proper subgroups of $G$ with the property that every element of $G$ is contained in $H_i^g$,
for some $i \in \{1, \ldots , k\}$ and for some $g \in G$, that is,
$$G=\bigcup_{i=1}^k\bigcup_{g\in G}H_i^g.$$
Clearly, if $G$ is a cyclic group, then $G$ admits no normal $k$-covering, because the generators of $G$ lie in no proper subgroups.

The \textbf{\textit{normal covering number}} of the group $G$, denoted by $\gamma(G)$, is the smallest integer
$k$ such that $G$ admits a normal $k$-covering. If $G$ is
cyclic, we set $\gamma(G) = \infty$, with the convention that $k < \infty$ for every integer $k$.}
\end{definition}

It follows from a theorem of Jordan that $\gamma(G)\ge 2$, for every finite group $G$. Indeed, if $\gamma(G)=1$, then $G$ admits a proper subgroup $H$ with the property that $$G=\bigcup_{g\in G}H^g.$$
There is another way to phrase this equality: the group $G$ acting on the right cosets of $H$ in $G$ admits no derangement. However, this contradicts a celebrated theorem of Jordan; see, for instance, the beautiful expository article of Serre~\cite{Serre}. 

The following two lemmas are preparatory for the proof of Theorem~\ref{thrm:main1}: these are Lemmas~9 and~10 in~\cite{Lucchini} (we include a proof here for completeness). 
\begin{lemma}\label{l:prel1}
Let $Y$ be a proper subgroup of a finite group $X$ and let $Z\unlhd X$ with $X=YZ$. Then $$Z\ne \bigcup_{x\in X}(Y\cap Z)^x.$$
\end{lemma}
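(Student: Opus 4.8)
The plan is to reduce the statement to Jordan's theorem applied inside $Z$. Set $W:=Y\cap Z$. Since $Z\unlhd X$, the intersection $W=Y\cap Z$ is normal in $Y$, and moreover every $X$-conjugate of $W$ stays inside $Z$, because $W^x\subseteq Z^x=Z$ for all $x\in X$. So the whole union $\bigcup_{x\in X}W^x$ is a union of subgroups of $Z$, and the claim to be proved is precisely that these conjugates do not cover $Z$.

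The key step is to replace the $X$-conjugates of $W$ by $Z$-conjugates. Using $X=YZ$, write an arbitrary $x\in X$ as $x=yz$ with $y\in Y$ and $z\in Z$. Since $W\unlhd Y$, we have $W^y=W$, and therefore $W^x=W^{yz}=(W^y)^z=W^z$. Conversely, every $W^z$ with $z\in Z$ arises by taking $y=1$. Hence
$$\bigcup_{x\in X}W^x=\bigcup_{z\in Z}W^z,$$
so it suffices to show that $Z\ne\bigcup_{z\in Z}W^z$, i.e.\ that the proper-subgroup $W$ of $Z$ does not cover $Z$ by its $Z$-conjugates.

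It remains to check that $W$ is a \emph{proper} subgroup of $Z$, so that Jordan's theorem is available. If instead $W=Y\cap Z=Z$, then $Z\subseteq Y$, whence $X=YZ=Y$, contradicting the hypothesis that $Y$ is a proper subgroup of $X$. Thus $W<Z$, and applying the theorem of Jordan already recalled in the discussion of $\gamma(G)\ge 2$ (no finite group is the union of the conjugates of a single proper subgroup) to the group $Z$ with proper subgroup $W$ gives an element of $Z$ lying in no $W^z$, completing the argument. The only real content is the conjugacy reduction $W^x=W^z$; once that is in place the result is immediate, so I expect no serious obstacle beyond correctly exploiting the normality $W\unlhd Y$ that comes from $Z\unlhd X$.
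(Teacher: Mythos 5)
Your proposal is correct and follows essentially the same route as the paper: both use the normality $Y\cap Z\unlhd Y$ (coming from $Z\unlhd X$) together with the factorization $X=YZ$ to replace the $X$-conjugates of $Y\cap Z$ by $Z$-conjugates, and then apply Jordan's theorem inside $Z$, with the same final observation that $Y\cap Z=Z$ would force $X=YZ=Y$. The only difference is cosmetic: the paper runs the argument as a single proof by contradiction, while you first establish $Y\cap Z<Z$ and then invoke Jordan directly.
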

\begin{proof}Suppose $Z=\bigcup_{x\in X}(Y\cap Z)^x.$ Since $Z\unlhd X$, we have $Y\cap Z\unlhd Y$ and hence $(Y\cap Z)^y=Y\cap Z$, for every $y\in Y$.  Hence
\begin{align*}
Z&=\bigcup_{x\in X}(Y\cap Z)^x=\bigcup_{z\in Z}\bigcup_{y\in Y}(Y\cap Z)^{yz}=\bigcup_{z\in Z}(Y\cap Z)^z.
\end{align*}
Therefore,  $Y\cap Z$ is a subgroup of $Z$ whose $Z$-conjugates cover the whole of $Z$. From the theorem of Jordan, this implies $Z=Y\cap Z$, that is $Z \le Y$. Thus $X=YZ=Y$, contradicting the fact that $Y$ is a proper subgroup of $X$.
\end{proof}

\begin{lemma}\label{l:prel2}
Let $Y$ be a proper subgroup of a finite group $X$ and let $Z_1$ and $Z_2$ be two distinct minimal normal subgroups of $X$. If 
$X=YZ_1=YZ_2$, then $Y\cap Z_1=Y\cap Z_2=1$.
\end{lemma}
\begin{proof}
Assume $X=YZ_1=YZ_2$. As $Z_1\unlhd X$, we have $$Y\cap Z_1\unlhd Y.$$ Since $Z_1$ and $Z_2$ are distinct minimal normal subgroups of $X$ and since $Z_1\cap Z_2$ is normal in $X$, we must have $Z_1\cap Z_2=1$. From this, it immediately follows that the elements of $Z_1$ and $Z_2$ commute with each other. Therefore $Y\cap Z_1$ is centralized by $Z_2$.

From the previous paragraph, $Y\cap Z_1$ is normalized by $Y$ and centralized by $Z_2$ and hence $Y\cap Z_1$ is normalized 
by $\langle Y,Z_2\rangle=YZ_2=X$, that is, $Y\cap Z_1\unlhd X$. Since $Z_1$ is a minimal normal subgroup of $X$, we deduce $Z_1\le Y$ or $Y\cap Z_1=1$. If $Z_1\le Y$, then $X=YZ_1=Y$, contradicting the fact that $Y$ is a proper subgroup of $X$. Thus $Y\cap Z_1=1$.

A similar argument yields $Y\cap Z_2=1$. 
\end{proof}

\subsection{Preliminary reductions}\label{sec:1} 

We let $G$ be a transitive permutation group on $\Omega$ such that the derangement graph $\Gamma_G$ of $G$ is bipartite. We fix a bipartition $$\mathcal{B}=\{H,G\setminus H\}$$ 
of the vertices of $\Gamma_G$. Without loss of generality suppose that $H$ contains the identity element of $G$. Since $\Gamma_G$ is bipartite, this implies that no elements of $H$ are derangements. 

The group $G$ acts as a group of automorphisms on the graph $\Gamma_G$ via its right regular representation. Since $G$ acts transitively on the vertices of $\Gamma_G$, the subgroup $G_\mathcal{B}$ of $G$ fixing setwise the two parts of the bipartition $H$ and $G\setminus H$ has index $2$ in $G$ and acts transitively on both $H$ and $G\setminus H$.  As $1\in H$, we deduce
$$H=\{1^{x}\mid x\in G_\mathcal{B} \}=\{1\cdot x\mid x\in G_\mathcal{B}\}=G_\mathcal{B}.$$
This shows that $H$ is a subgroup of $G$ with 
\begin{equation}\label{eq:-1}
[G:H]=2\hbox{ and }H\unlhd G.
\end{equation}

As usual, let $\mathcal{D}$ be the set of derangements of $G$. The subgroup $\langle\mathcal{D}\rangle$ of $G$ generated by $\mathcal{D}$ contains $G\setminus H$ and also the identity element of $G$. Therefore, $|\langle \mathcal{D}\rangle| \ge  |G|/2+1$. This shows that
\begin{equation}\label{eq:first}G=\langle\mathcal{D}\rangle.
\end{equation}
From~\eqref{eq:first}, we deduce that $\Gamma_G$ is connected.

Since $H\unlhd G$ by~\eqref{eq:-1},  $G_\omega H$ is subgroup of $G$, for every $\omega\in \Omega$. As $H$ has no derangements, by the theorem of Jordan, $H$ cannot be transitive on $\Omega$. Thus $H$ is intransitive on $\Omega$ and $G_\omega H$ is a proper subgroup of $G$. However, $H\le G_\omega H<G$ and $[G:H]=2$; therefore $H=HG_\omega$ and $G_\omega\le H$. This yields
\begin{equation}\label{eq:-2}
G_\omega \le H, \quad   \forall \omega \in \Omega.
\end{equation}
In particular, $G_\omega=H_\omega$, for all $\omega\in \Omega$.

From~\eqref{eq:-2} and from the fact that $H$ is intersecting, we deduce
\begin{equation}\label{eq:00}
H=\bigcup_{\omega \in \Omega}G_\omega = \bigcup_{\omega \in \Omega}H_\omega.
\end{equation}

For the rest of the proof we fix $\omega\in \Omega$ and $g'\in G\setminus H$ and we set $\omega':=\omega^{g'}$.

As $[G:H]=2$ and as $H$ is intransitive on $\Omega$, we deduce that $H$ has two orbits on $\Omega$; namely 
\begin{align}\label{eq:DeltaDefined}
\Delta:=\omega^H=\{\omega^h\mid h\in H\} 
\mbox{ and } 
\Delta':=\omega'^H=\{\omega'^h\mid h\in H\}.  
\end{align}
From~\eqref{eq:00} and~\eqref{eq:DeltaDefined}, we deduce
$$H=\bigcup_{\delta\in \Delta}H_\delta\cup\bigcup_{\delta'\in\Delta'}H_{\delta'}$$
and hence
\begin{equation}\label{eq:2}
H=\bigcup_{h\in H}H_\omega^h\cup\bigcup_{h\in H}H_{\omega'}^h.
\end{equation}
In other words, $H$ has two subgroups $H_\omega$ and $H_{\omega'}$ such that the conjugates of $H_\omega$ and $H_{\omega'}$ cover the whole of $H$.

Suppose now that $H_\omega=H$ (or that $H_{\omega'}=H$). 
As $H_\omega=G_\omega$, we deduce $G_\omega=H$. 
Since $G$ is a transitive permutation on $\Omega$, $G_\omega$ is a core-free subgroup of $G$, that is, the only normal subgroup of $G$ contained in $G_\omega$ is the identity subgroup. However, from~\eqref{eq:-1}, we have $G_\omega=H\unlhd G$ and hence $H=G_\omega=1$. This gives $|G|=[G:H]=2$ and hence $|\Omega|=2$. Therefore, for the rest of the proof we may suppose that $|\Omega|>2$ and hence, in particular, 
\begin{equation}\label{eq:-3}H_\omega\,\textrm{ and }\,H_{\omega'}\textrm{ are proper subgroups of }H.
\end{equation}
With the terminology in Definition~\ref{def:1}, from~\eqref{eq:2}, we have
\begin{equation}\label{eq:3}\gamma(H)=2,
\end{equation} that is, the normal covering number of $H$ is $2$.

 In view of the conclusion in the statement of Theorem~\ref{thrm:main1}, our task here is to reach a contradiction. Among all possible transitive permutation groups $G$ with $|\Omega|>2$ and with the derangement graph of $G$ bipartite, choose $G$ with $|\Omega|+|G|$ as small as possible.


\subsection{The action of $H$ on $\Delta$ and $\Delta'$}

The goal of this subsection is to prove that $H$ acts primitively and faithfully on both $\Delta$ and $\Delta'$ (defined in~\eqref{eq:DeltaDefined}).

Let $M$ be a subgroup of $H$ with $$H_\omega \le M<H.$$ As $H_\omega=G_\omega$, we have $G_\omega\le M$ and hence the $M$-orbit $\omega^M$ is a block of imprimitivity for the action of $G$ on $\Omega$ contained in $\Delta$, see~\cite[Theorem~$1.5$A]{DM}. Let $\Omega'$ be the system of imprimitivity determined by the block $\omega^M$ and let $G'$ be the permutation group induced by the action of $G$ on $\Omega'$. Since the derangement graph for the action of $G$ on $\Omega$ is bipartite, so is the derangement graph for the action of $G'$ on $\Omega'$. Now, 
\[
|\Omega'|=[G:M]=\frac{[G:H_\omega]}{[M:H_\omega]}=\frac{[G:H]\, [H:H_\omega]}{[M:H_\omega]}=2\frac{[ H:H_\omega]}{ [M:H_\omega]}>2
\]
and hence, from our minimal choice of $G$, we must have that $\Omega'=\Omega$ and $G'=G$. Clearly, this is only possible if $M=H_\omega$. Since $M$ was an arbitrary proper subgroup of $H$ containing 
$H_\omega$, we deduce that
\begin{equation}\label{eq:-4}
H_\omega\textrm{ and }H_{\omega'} \textrm{ are maximal subgroups of }H.
\end{equation} 
There is another way to say this: the action of $H$ on $\Delta$ and on $\Delta'$ is primitive, see~\cite[Corollary~$1.5$A]{DM}.

Let $H_{(\Delta)}$ be the subgroup of $H$ fixing pointwise each element of $\Delta$ and, similarly, let $H_{(\Delta')}$ be the subgroup of $H$ fixing pointwise each element of $\Delta'$. Now, $H_{(\Delta)}\cap H_{(\Delta')}$ consists of permutations in $H$ fixing each element of $\Delta\cup \Delta'=\Omega$. Since $H$ acts faithfully on $\Omega$, we have $$H_{(\Delta)}\cap H_{(\Delta')}=1.$$ Suppose $H_{(\Delta)}\ne 1\ne H_{(\Delta')}$. As $\Delta$ is an $H$-orbit, $H_{(\Delta)}\unlhd H$. From~\eqref{eq:-4}, we have $H_{(\Delta)}\le H_{\omega'}$ or $H=H_{\omega'}H_{(\Delta)}$. However, if $H_{(\Delta)}\le H_{\omega'}$, then for every $h\in H$ we have $$H_{(\Delta)}=(H_{(\Delta)})^h\le (H_{\omega'})^h=H_{\omega'^h}.$$Hence $$H_{(\Delta)} \le \bigcap_{\delta'\in \Delta}H_{\delta'}=H_{(\Delta')},$$
which yields $1=H_{(\Delta)}\cap H_{(\Delta')}=H_{(\Delta)}\ne 1$, a contradiction. Thus $H=H_{\omega'}H_{(\Delta)}$. In particular, we are in the position to apply Lemma~\ref{l:prel1} with $(X,Y,Z)=(H,H_{\omega'},H_{(\Delta)})$. We deduce that there exists $$y\in H_{(\Delta)}\setminus\bigcup_{h\in H}H_{\omega'}^h.$$ Applying this same argument with the roles of $H_{(\Delta)}$ and $H_{(\Delta')}$ interchanged, we deduce that there exists $$z\in H_{(\Delta')}\setminus\bigcup_{h\in H}H_\omega^h.$$
We now consider the element $x:=yz$. This would be a derangement in the group $H$, which is a contradiction. This contradiction has arisen from assuming $H_{(\Delta)}\ne 1\ne H_{(\Delta')}$ and hence (since $H_{(\Delta)}$ and $H_{(\Delta')}$ are conjugate in $G$) we have $H_{(\Delta)}=H_{(\Delta')}=1$. Summing up,
\begin{equation}\label{eq:third}
H \textrm{ acts primitively and faithfully on both }\Delta \textrm{ and }\Delta'.
\end{equation}

We conclude this section recalling some properties of primitive groups. The subgroup of an abstract group $X$ generated by its minimal normal subgroups is called the \textbf{\textit{socle}}. We let $M$ be the socle of $H$. Since $H$ is a primitive group (on either $\Delta$ or $\Delta'$), $M$ is either a minimal normal subgroup of $H$ or $M$ is the direct product of two distinct isomorphic minimal normal subgroups of $H$, see~\cite[Theorem~4.3B]{DM}. In the next subsection, we show that only the first case is possible.

\subsection{The group $H$ has a unique minimal normal subgroup}

From the previous subsection we know that $M=N_1$ or $M=N_1 \times N_2$ where $N_1,N_2$ are minimal normal subgroups of $H$. In this subsection we show that the second case cannot hold.

Assume $M=N_1 \times N_2$. Let $K\in \{H_\omega,H_{\omega'}\}$. By~\eqref{eq:-4}, $K$ is a maximal subgroup of $H$; as $N_i\unlhd H$, we deduce $H=KN_i$ or $N_i\le K$ for $i\in \{1,2\}$. However, the last possibility contradicts~\eqref{eq:third} because $H$ acts faithfully on both $\Delta$ and $\Delta'$. 
Thus $$H_\omega N_1=H_\omega N_2=H=H_{\omega'}N_1=H_{\omega'}N_2.$$

From Lemma~\ref{l:prel2}, we deduce $$H_\omega \cap N_1=H_{\omega}\cap N_2=1=H_{\omega'}\cap N_1=H_{\omega'}\cap N_2.$$
In particular, for every $h\in H$, we have $1=(H_\omega \cap N_1)^h=H_{\omega}^h\cap N_1$ and $1=(H_{\omega'}\cap N_1)^h=H_{\omega'}^h\cap N_1$. Therefore,
\[
1=N_1\cap \left(\bigcup_{h\in H} H_\omega^h  \cup   \bigcup_{h\in H}H_{\omega'}^h\right)
  =N_1\cap H=N_1,
\]
contradicting the fact that $N_1\ne 1$. This shows that $M=N_1 \times N_2$ is not possible, so $M=N_1$, and therefore, $M$ is a minimal normal subgroup of $H$. 

Recall (see also~\cite[Chapter~$4$]{DM}) that a minimal normal subgroup of a group is the direct product of pairwise isomorphic simple groups. In particular, we may write
$$M=S_1\times \cdots\times S_r,$$
for some positive integer $r$ and for some simple groups $S_1,\ldots,S_r$ with $S_1\cong S_2\cong\cdots \cong S_r$. When $S_1$ is abelian, we deduce that $S_1$ has prime order $p$ and hence $M$ is an elementary abelian $p$-group. When $S_1$ is non-abelian, it is elementary to verify that $S_1,\ldots,S_r$ are the only minimal normal subgroups of $M$; moreover, the fact that $M$ is a minimal normal subgroup of $H$ implies that the action of $H$ by conjugation on
$\{S_1,\ldots,S_r\}$ is transitive.


\subsection{Preliminary observations on the structure of $M$}  

The maximality of $H_\omega$ and $H_{\omega'}$ in $H$ (see~\eqref{eq:-4}) and the fact that $H$ acts 
faithfully on both $\Delta$ and $\Delta'$ (a.k.a.~\eqref{eq:third}) yield
\begin{equation}\label{eq:fifth}
H=H_\omega M=H_{\omega'}M.
\end{equation}

Intersecting the two members of the equality in~\eqref{eq:2} with $M$, we obtain
\[
M= \left( \bigcup_{h\in H}(M\cap H_\omega)^h \right)  \bigcup \left( \bigcup_{h\in H}(M\cap H_{\omega'})^h \right).
\]
Since  $H=H_\omega M=H_{\omega'}M$, we deduce
\begin{align*}
M&=       \left( \bigcup_{x\in H_\omega}   \bigcup_{h\in M}  (M\cap H_\omega)^{xh} \right) 
 \bigcup  \left( \bigcup_{x\in H_{\omega'}}\bigcup_{h\in M}  (M\cap H_{\omega'})^{xh} \right).
\end{align*}
Observe now that, as $M\unlhd H$, we have $M\cap H_\omega\unlhd H_\omega$ and $M\cap H_{\omega'}\unlhd H_{\omega'}$, and therefore,
\begin{equation}\label{eq:new}
M=\left( \bigcup_{h\in M}(M\cap H_\omega)^h \right) \bigcup \left( \bigcup_{h\in M}(M\cap H_{\omega'})^h \right) .
\end{equation}

Assume $M\cap H_\omega=1$ or $M\cap H_{\omega'}=1$. Without loss of generality, we may suppose that $M\cap H_\omega=1$. Recall that $g'$ was defined in Section~\ref{sec:1} so that $\omega^{g'}=\omega'$. Since $M$ is the unique minimal normal subgroup of $H$ and $H\unlhd G$, $M$ is normalized by $g'$ and hence 
\[
1=(M\cap H_\omega)^{g'}=M\cap H_\omega^{g'}=M\cap H_{\omega^{g'}}=M\cap H_{\omega'}.
\]
However, this and~\eqref{eq:new} yield $M=1$, which is a contradiction. Therefore $$M\cap H_\omega\ne 1 \ne M\cap H_{\omega'}.$$

This result will be used to show that $S_1$ is not abelian, so we will assume $S_1$ is abelian and derive a contradiction. If $S_1$ is abelian, then $M$ is also abelian and hence $M\cap H_{\omega}\unlhd M$. Since $M\unlhd H$, we also have $M\cap H_\omega\unlhd H_\omega$ and hence $M\cap H_\omega \unlhd \langle H_\omega,M\rangle=H_\omega M=H$, by~\eqref{eq:fifth}. Since $M$ is a minimal normal subgroup of $H$, we have either $M\cap H_\omega=1$ or $M\le H_\omega$. However, the first possibility contradicts the previous paragraph and the second possibility contradicts~\eqref{eq:third}. Therefore
\begin{equation}\label{eq:sixth}
S_1\hbox{ is a non-abelian simple group}.
\end{equation}

\subsection{The group $M$ is a non-abelian simple group.} 
In this subsection we show that $M$ is a non-abelian simple group. We argue by contradiction and suppose that $M$ is not a non-abelian simple group, that is,
$$r>1.$$ 

We have $M = S_1 \times \dots \times S_r$ and, for simplicity, we let $S$ be a non-abelian simple group with $S\cong S_j$, for all $j\in \{1,\ldots,r\}$. Further, let $$\pi_1:M\to S_1$$ be the projection of $M$ onto the first component $S_1$. 

We now use the fact that $H$ acts faithfully and primitively on $\Delta$ and on $\Delta'$ via the O'Nan-Scott theorem~\cite[Chapter~$4$]{DM}. This theorem gives a satisfactory description of the embedding of $M$ in $H$ and of the intersection of the stabilizer of a point with $M$. We need the following information from the O'Nan-Scott classification of primitive groups: The maximal subgroups $X$ of $H$ with $XM=H$ and $M\cap X \ne 1$ are of the following two types:

\begin{enumerate}
\item \textbf{Product Type:}
When $1<\pi_1(M\cap X)<S$; in this case, 
$$M\cap X=T_1\times T_2\times \cdots \times T_r,$$
with $1<T_i<S_i$ and $T_i\cong T_j$, for every $i,j\in \{1,\ldots,r\}$;

\item \textbf{Diagonal Type:} when $S=\pi_1(M\cap X)$; in this case, there exists a partition $\Phi$ 
of $\{1,\ldots,r\}$ such that 
$$M\cap X=\prod_{B\in \Phi}D_B,$$
where all the blocks $B=\{j_1,\ldots,j_\ell\}$ have the same cardinality $\ell>1$ and, for every $B\in \Phi$, $D_B$ is a full diagonal subgroup of 
$$\prod_{j\in B}S_j,$$
that is, for every $k\in \{1,\ldots,\ell\}$, there exists $\phi_{j_k}\in \mathrm{Aut}(S_{j_k})$ such that
$$D_B:=\{(x^{\phi_{j_1}},x^{\phi_{j_2}},\ldots,x^{\phi_{j_\ell}})\mid x\in S\}\le S_{j_1}\times S_{j_2}\times \cdots \times S_{j_\ell}.$$
\end{enumerate}
  
In particular, we may apply these considerations twice: with $X:=H_\omega$ and with $X:=H_{\omega'}$. Therefore, replacing the role of $\omega$ and $\omega'$ if necessary, we have three possibilities:
\begin{enumerate}
\item\label{case1} $H_\omega$ and $H_{\omega'}$ are both of diagonal type;
\item\label{case2} $H_\omega$ is of product  type and $H_{\omega'}$ is of diagonal type;
\item\label{case3} $H_\omega$ and $H_{\omega'}$ are both of product type.
\end{enumerate}

We deal with these three cases in turn. Assume Case~\eqref{case1}. Let $$\Lambda:=\{(s,\underbrace{1,\ldots,1}_{r-1\textrm{ times}})\mid s\in S \} \subseteq M.$$ 
By the way in which maximal subgroups of diagonal type are defined, the blocks of the partition giving rise to the diagonal subgroup have cardinality $\ell>1$. From this it  follows that   
\[
\Lambda\cap H_\omega^h=\Lambda\cap H_{\omega'}^h=1,
\]
for every $h\in M$, contradicting~\eqref{eq:new}. 

Assume Case~\eqref{case2}. We have $H_\omega\cap M=T_1\times \cdots \times T_r$, with $T_i\cong T_j$  and with $T_i< S_i$. Let $T$ be a subgroup of $S$ with $T\cong T_i$, for each $i\in\{1,\ldots,r\}$. As $T$ is a proper subgroup of $S$, from Jordan's theorem, there exists $s\in S\setminus \bigcup_{x\in S}T^x$. Consider $$m:=(s,\underbrace{1,\ldots,1}_{r-1\textrm{ times}})\in M.$$ Then $$m\notin\bigcup_{h\in M}(M\cap H_{\omega'})^h,$$ from the description of the elements in diagonal subgroups (again, the blocks of the partition giving rise to the diagonal subgroup have cardinality $\ell>1$). Also $$m\notin\bigcup_{h\in M}(M\cap H_\omega)^h,$$ from our choice of $s$. However, this contradicts~\eqref{eq:new}. 

Assume Case~\eqref{case3}. Let 
\begin{align*}
M\cap H_\omega&:=T_1\times \cdots \times T_r,\\
M\cap H_{\omega'}&:=U_1\times \cdots \times U_r,
\end{align*}
with $T_i\cong T_j$, $U_i\cong U_j$, $T_i< S_i$ and $U_i< S_i$, for every $i,j\in \{1,\ldots,r\}$. (Recall that in this subsection we are arguing by contradiction and we are assuming $r>1$.) Since $T_1$ and $U_2$ are proper subgroups of $S$, from Jordan's theorem, there exists 
$$a\in S\setminus\bigcup_{s\in S}T_1^s \hbox{ and }b\in S\setminus\bigcup_{s\in S}U_2^s.$$ Consider $$m:=(a,b,\underbrace{1,\ldots,1}_{r-2\textrm{ times}})\in M.$$ Then $$m\notin\bigcup_{h\in M}(M\cap H_{\omega})^h \cup \bigcup_{h\in M}(M\cap H_{\omega'})^h,$$ contradicting 
again~\eqref{eq:new}. Therefore $r=1$, that is, $M$ is a non-abelian simple group.

\subsection{Conclusive analysis}

From~\eqref{eq:new}, we have $\gamma(M)=2$ and, from the previous subsection, $M$ is a non-abelian simple group. At this point, we could refer directly to the classification of the simple groups admitting a $2$-covering, which in turn relies on the Classification of the Finite Simple Groups. This classification is spread through various papers. For instance,~\cite{bubboloni} deals with alternating groups;~\cite{pellegrini} deals with sporadic simple groups and exceptional groups of Lie type;~\cite{bubboloni1,bubboloni2} deal with simple classical groups.  However, first we obtain another reduction based on  the fact that the normal $2$-covering arising in~\eqref{eq:new} is rather special.

Recall that $g'\in G$ and $\omega'=\omega^{g'}$. Since $M$ is a characteristic subgroup of $H$ and $H\unlhd G$, we deduce $$(M\cap H_\omega)^{g'}=M^{g'}\cap H_{\omega}^{g'}=M\cap H_{\omega'}.$$
In particular, $M\cap H_\omega$ and $M\cap H_{\omega'}$ are proper subgroups of $M$ conjugate via an automorphism of $M$.

Let $\pi(M)$ be the set of prime numbers dividing the order of $M$ and let $\pi(H_\omega\cap M)$ be the set of prime numbers dividing the order of $H_\omega\cap M$. From the above equality, along with~\eqref{eq:new}, we deduce 
\begin{equation}\label{pi}\pi(M)=\pi(H_\omega\cap M).\end{equation}

For the benefit of the reader we now report~\cite[Corollary~5]{LPS}, tailored to our current notation.
\begin{lemma}\label{l:LPS}Let $M$ be a non-abelian simple group and let $X$ be a proper subgroup of $M$. If $\pi(X)=\pi(M)$, then $(M,X)$ is given in 
Table~$10.7$ in~\cite{LPS}.
\end{lemma}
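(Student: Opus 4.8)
The statement is \cite[Corollary~5]{LPS} transcribed into our notation, so the cleanest ``proof'' is the citation itself, and for the purposes of Theorem~\ref{thrm:main1} we only need to quote it: the content is a classification, and any self-contained argument must invoke the Classification of the Finite Simple Groups together with the detailed subgroup structure of each family, which is orthogonal to the combinatorial thrust of the paper. Nonetheless, the natural route to a statement of this shape is the following, and I sketch it to indicate why the conclusion is believable and what makes it hard.

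The hypothesis $\pi(X)=\pi(M)$ forces $X$ to be a \emph{large} subgroup, in the sense that $|X|$ is divisible by \emph{every} prime dividing $|M|$, so in particular $X$ must capture the largest primes in $\pi(M)$. The plan is to argue family by family through the finite simple groups. For each $M$ I would first record $\pi(M)$, paying special attention to the primitive prime divisors (Zsygmondy primes) of the relevant $q^i-1$ or $q^i+1$, since these are precisely the primes that a proper subgroup is most likely to omit. A useful preliminary reduction is that it suffices to treat \emph{maximal} subgroups: if $X<Y<M$ with $Y$ maximal, then $\pi(M)=\pi(X)\subseteq\pi(Y)\subseteq\pi(M)$, whence $\pi(Y)=\pi(M)$ as well, so one determines the maximal $Y$ with full prime spectrum first and then inspects their subgroups to recover all of $(M,X)$.

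The bulk of the work, and the main obstacle, is the classical groups, where the maximal subgroups are organised by Aschbacher's theorem into the geometric classes $\mathcal{C}_1,\dots,\mathcal{C}_8$ together with the class $\mathcal{S}$ of almost simple irreducible subgroups. For each class one must decide which members remain divisible by all of the primitive prime divisors of $|M|$; since the orders of these Zsygmondy primes grow with the dimension, a generic maximal subgroup sits inside a proper parabolic or a smaller classical group and therefore fails to contain a primitive prime divisor of the top-degree cyclotomic factor, so it cannot satisfy $\pi(X)=\pi(M)$. Only a short, structured list of exceptions survives, and these are exactly the rows of Table~$10.7$. The alternating, sporadic and exceptional groups are then handled by explicit inspection of their prime spectra and known maximal subgroup lists (the same machinery underlies the closely related normal $2$-covering determinations in \cite{bubboloni,bubboloni1,bubboloni2,pellegrini}), each of which is a finite check once $\pi(M)$ is known. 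Since our application needs only the \emph{existence} of this finite list of pairs $(M,X)$, I would simply invoke \cite{LPS} at this point rather than reproduce the case analysis.
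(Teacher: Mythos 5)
Your proposal is correct and matches the paper exactly: the paper states this lemma purely as a restatement of \cite[Corollary~5]{LPS} ``tailored to our current notation,'' with no independent proof, which is precisely the citation you give. Your additional sketch of how such a classification is proved (reduction to maximal subgroups, primitive prime divisors, Aschbacher classes) is a reasonable account of the machinery behind \cite{LPS}, but it is supplementary; the operative proof in both cases is the appeal to that reference.
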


As $\pi(M\cap H_\omega)=\pi(M)=\pi(M\cap H_{\omega'})$, we are in the position to apply Lemma~\ref{l:LPS} with $X:=M\cap H_\omega$ and with $X:=M\cap H_{\omega'}$. We deduce that $(M,M\cap H_\omega)$ and $(M,M\cap H_{\omega'})$ are in Table~$10.7$ of~\cite{LPS}. We now consider each row in~\cite[Table~$10.7$]{LPS} in turn.

When $(M,M\cap H_\omega)$ is not as in line~1,~3,~4,~5,~6, 
the proof follows with a simple computation with the computer algebra system \texttt{magma}~\cite{magma}. There is no normal $2$-covering of $M$ determined by two proper subgroups conjugate in $\Aut(M)$. For simplicity, we have reported in Table~\ref{table1} the remaining lines of~\cite[Table~10.7]{LPS}.

\begin{table}[!h]
\centering
\begin{tabular}{|c|c|c|c|}
\hline \hline
Line&$M$ & $X\in \{M\cap H_\omega,M\cap H_{\omega'}\}$ & Conditions  \\\hline
1&$\mathrm{Alt}(c)$ &$\Alt(k)\unlhd X\le \Sym(k)\times \mathrm{Sym}(c-k)$& if $p$ is prime and \\
 & & & $p\le c$, then $p\le k$\\ \hline
3&$\mathrm{PSp}_{2m}(q)$&$\nor M{\Omega_{2m}^-(q)}$&$m$ and $q$ even\\ \hline
4&$\mathrm{P}\Omega_{2m+1}(q)$&$\nor M{\Omega_{2m}^-(q)}$&$m$ even and $q$ odd\\ \hline
5&$\mathrm{P}\Omega^+_{2m}(q)$&$\nor M {\Omega_{2m-1}(q)}$&$m$ even\\ \hline
6&$\mathrm{PSp}_{4}(q)$&$\nor M{\mathrm{PSp}_2(q^2)}$&\\\hline
\end{tabular}
\caption{$\nor M{X}$ denotes the normalizer in $M$ of $X$ } \label{table1}
\end{table}

Suppose $(M,M\cap H_\omega)$ is as in line~1. Then $M$ is an alternating group admitting a normal $2$-covering and, from~\cite{bubboloni}, we deduce that $c\le 8$. It can be checked directly, or with the help of a computer, that there are no normal $2$-coverings of $\mathrm{Alt}(c)$ (with $c\le 8$) using two isomorphic subgroups.

Suppose $(M,M\cap H_\omega)$ is as in line~3. We can postpone the case $m=2$ when we deal with line~6. From~\cite[Main Theorem]{bubboloni2}, we deduce that $m=4$, because when $m\ge 5$ the group $\mathrm{PSp}_{2m}(q)$ admits no normal $2$-covering with two isomorphic maximal subgroups.  Now,~\cite[Table~$8.48$]{bhr} lists all the maximal subgroups of $\mathrm{PSp}_8(q)$. From the ``$c$ column'' in~\cite[Table~$8.48$]{bhr}, we deduce that $M\cap H_\omega$ and $M\cap H_{\omega'}$ are conjugate in $M$ (see~\cite[page 374]{bhr} for the definition of $c$). Now,~\eqref{eq:new} contradicts Jordan's theorem.

Suppose $(M,M\cap H_\omega)$ is as in line~4. When $m=2$, we have $\mathrm{P}\Omega_{5}(q)\cong\mathrm{PSp}_4(q)$ and hence we can postpone again this case when we deal with line~6. From~\cite[Main Theorem]{bubboloni2}, we deduce that $m=4$, because when $m\ge 5$ the group $\mathrm{P}\Omega_{2m+1}(q)$ admits no normal $2$-covering.  Now,~\cite[Table~$8.58$]{bhr} lists all the maximal subgroups of $\mathrm{P}\Omega_9(q)$. From the ``$c$ column'' in~\cite[Table~$8.58$]{bhr}, we deduce that $M\cap H_\omega$ and $M\cap H_{\omega'}$ are conjugate in $M$. Now,~\eqref{eq:new} contradicts Jordan's theorem.

Suppose $(M,M\cap H_\omega)$ is as in line~5. Here $m\ge 4$ because $\mathrm{P}\Omega_4^+(q)\cong\mathrm{PSL}_2(q)\times\mathrm{PSL}_2(q)$ is not a non-abelian simple group. From~\cite[Main Theorem]{bubboloni2}, we deduce that $m=4$, because when $m\ge 5$ the group $\mathrm{P}\Omega_{2m}^+(q)$ admits no normal $2$-covering. Now,~\cite[Table~$8.50$]{bhr} lists all the maximal subgroups of $\mathrm{P}\Omega_8^+(q)$. From this list, we cannot deduce that $M\cap H_\omega$ and $M\cap H_{\omega'}$ are conjugate in $M$ and hence we cannot argue as in the previous two cases. Thus, let $V=\mathbb{F}_q^8$ be the $8$-dimensional vector space over the field $\mathbb{F}_q$ with $q$ elements,  and let $\mathfrak{q}:V\to\mathbb{F}_q$ be the hyperbolic non-degenerate quadratic form on $V$ preserved by the covering group $\Omega_8^+(q)$. We may choose a hyperbolic basis $(e_1,e_2,e_3,e_4,f_1,f_2,f_3,f_4)$ for $V$ so that the matrix of the quadratic form $\mathfrak{q}$ with respect to this basis is
\[
\begin{pmatrix}
0&I\\
0&0
\end{pmatrix},
\]
where $0$ and $I$ represent the $4\times 4$ zero and identity matrix. From the ``$c$ column'' in~\cite[Table~$8.50$]{bhr}, we see that when $q$ is even, there are three $\mathrm{P}\Omega_8^+(q)$-conjugacy classes of maximal subgroups of the form $\nor M{\Omega_{7}(q)}=\nor M{\mathrm{Sp}_6(q)}$: one in the Aschbacher class $\mathcal{C}_1$ and two in the Aschbacher class $\mathcal{S}$. Similarly, when $q$ is odd, there are six $\mathrm{P}\Omega_8^+(q)$-conjugacy classes of maximal subgroups of the form $\nor M{\Omega_{7}(q)}$: two in the Aschbacher class $\mathcal{C}_1$ and four in the Aschbacher class $\mathcal{S}$. The maximal subgroups in the Aschbacher class $\mathcal{C}_1$ arise as stabilizers of $1$-dimensional non-degenerate subspaces of $V$, with respect to the form $\mathfrak{q}$. Whereas, the maximal subgroups in the Aschbacher class $\mathcal{S}$ arise via the spin representations of $\Omega_{7}(q)$ as described in~\cite[Section~$5.4$]{kl}. From the description of the conjugacy classes in $\Omega_8^+(q)$ in~\cite{wall}, we see that $\Omega_8^+(q)$ contains the unipotent matrix
\[
\tilde{g}=\begin{pmatrix}
1&1&0&0&0&0&0&0\\
0&1&1&0&0&0&0&0\\
0&0&1&0&0&0&0&0\\
0&0&0&1&1&0&0&0\\
0&0&0&0&1&1&0&0\\
0&0&0&0&0&1&1&0\\
0&0&0&0&0&0&1&1\\
0&0&0&0&0&0&0&1
\end{pmatrix},
\] 
consisting of two Jordan blocks of size $3$ and $5$. A computation using the matrix representation of the quadratic form $\mathfrak{q}$ shows that $\tilde{g}$ does not fix any $1$-dimensional non-degenerate subspace of $V$ and hence the projective image of $\tilde{g}$ in $\mathrm{P}\Omega_8^+(q)$ does not lie in any subgroup of the form $\nor M{\mathrm{P}\Omega_7(q)}$ in the Aschbacher class $\mathcal{C}_1$. Similarly, using the information on the spin representations of $\Omega_7(q)$ in Section~5.4 of~\cite{kl}, we deduce that the projective image of $\tilde{g}$ in $\mathrm{P}\Omega_8^+(q)$ does not lie in any subgroup of the form $\nor M{\mathrm{P}\Omega_7(q)}$ in the Aschbacher class $\mathcal{S}$. In particular, if we let $g\in\mathrm{P}\Omega_8^+(q)$ be the projective image of $\tilde{g}$, then $g$ does not lie in any conjugate of $M\cap H_\omega$ or of $M\cap H_{\omega'}$, contradicting~\eqref{eq:new}.

Suppose $(M,M\cap H_\omega)$ is as in line~6. Observe that we must also have $\mathrm{PSp}_2(q^2)\unlhd M\cap H_{\omega'}$. Now,~\cite[Tables~$8.12$ and~$8.14$]{bhr}  lists all the maximal subgroups of $\mathrm{PSp}_4(q)$. (There are two tables to consider depending whether $q$ is odd or $q$ is even, because when $q$ is even $\mathrm{PSp}_4(q)$ admits a graph-field automorphism.) From the ``$c$ column'' in~\cite[Tables~$8.12$ and~$8.14$]{bhr}, we deduce that  $M\cap H_\omega$ and $M\cap H_{\omega'}$ are conjugate in $M$. Now,~\eqref{eq:new} contradicts Jordan's theorem.

This concludes the proof of Theorem~\ref{thrm:main1}.

\section{Triangles in derangement graphs}\label{sec:triangles}

In this section we prove Theorem~\ref{thrm:main2}. In what follows, we assume that Theorem~\ref{thrm:main2} is false and we let $G$ a counterexample to Theorem~\ref{thrm:main2} with $|G|+|\Omega|$ as small as possible. We consider two cases.

\subsection{Case 1: $G$ acts primitively on $\Omega$}

Throughout this subsection $N$ denotes the socle of $G$. Broadly speaking, the O'Nan-Scott theorem classifies the finite primitive groups and specifically, it describes in detail the embedding of $N$ in $G$ and collects some useful information about the action of $N$. The main theorem of~\cite{LPSLPS} is the O'Nan-Scott theorem for finite primitive permutations groups. In this work five types of primitive groups are defined (depending on the group- and action-structure of the
socle), namely the \textbf{\textit{Affine-type}} (HA), the \textbf{\textit{Almost Simple}} (AS), the \textbf{\textit{Diagonal-type}}, the  \textbf{\textit{Product-type}}, and the \textbf{\textit{Twisted Wreath product}}, and it is shown that  every primitive group belongs to exactly one of these types.  
In~\cite{18} this division into types is refined further, namely the Diagonal-type is partitioned in \textbf{\textit{Holomorphic simple}} (HS), and \textbf{\textit{Simple Diagonal}} (SD), and the Product-type is partitioned into \textbf{\textit{Holomorphic compound}} (HC), \textbf{\textit{Compound Diagonal}} (CD), and \textbf{\textit{Product action}} (PA).

First we need an important definition. A finite transitive permutation group $G$ acting on a set $\Omega$ is \textbf{\textit{$2'$-elusive}} if 
\begin{itemize}
\item $|\Omega|$ is divisible by an odd prime, 
\item $G$ does not contain a derangement of odd prime order.
\end{itemize}

 If $G$ contains a derangement $g$ of odd prime order, then $\langle g\rangle$ is a clique in $\Gamma_G$ of size greater than $2$, which contradicts the fact that $\Gamma_G$ has no triangles. Therefore, either $G$ is $2'$-elusive or $|\Omega|$ is a power of $2$. 

Assume that $G$ is $2'$-elusive. The main result in~\cite{BG} shows that all of the following hold:
\begin{itemize}
\item $G$ is either AS or PA ; 
\item $\Omega=\Delta^k$ admits a product structure with $k\ge 1$;
\item $N=\mathrm{soc}(L)^k \unlhd G\le L\, \mathrm{wr} \, K$, where $L\le \Sym(\Delta)$ is primitive of AS type, $K\le \Sym(k)$ is transitive, and $G$ is endowed of the product action; and 
\item either $L=M_{11}$ and $|\Delta|=12$, or $\mathrm{soc}(L)={}^2F_4(2)'$ and $|\Delta|=2\, 304$.
\end{itemize}
It can be easily checked with \texttt{magma} that, when $L=M_{11}$ and $|\Delta|=12$, $L$ contains an element $x$ of order $8$ with the property that $x^2$ is a derangement. Therefore the set $\{1, x, x^2\}$ is a clique of size $3$ in $\Gamma_L$. Now, 
$$(\underbrace{1,\ldots,1}_{k\textrm{-times}})\,,(\underbrace{x,\ldots,x}_{k\textrm{-times}})\,(\underbrace{x^2,\ldots,x^2}_{k\textrm{-times}})$$
belong to $L^k\le G$ and form a clique of size $3$ in $\Gamma_G$. However, this contradicts the fact that $G$ is a counterexample to Theorem~\ref{thrm:main2}. Similarly,  it can be easily checked with \texttt{magma} that, when $\mathrm{soc}(L)={}^2F_{4}(2)'$ and $|\Delta|=2\,304$, $\mathrm{soc}(L)$ contains an element $g$ of order $4$ with the property that $g^2$ is a derangement. Therefore, the set $\langle g\rangle$ is a clique of size $4$ in $\Gamma_L$. As above, this clique can be used to obtain a clique of size $4$ in $\Gamma_G$, contradicting the fact that $G$ is a counterexample to Theorem~\ref{thrm:main2}.

Next we consider the case  where $|\Omega|$ is a power of $2$. From the O'Nan-Scott theorem, $G$ is either HA, AS
or PA. If $G$ is HA, then $N=\mathrm{soc}(G)$ is a regular subgroup of $G$. As $|N|=|\Omega|\ge 3$, $\Gamma_G$ has a clique of size at least 3, contradicting the fact that $G$ is a counterexample to Theorem~\ref{thrm:main2}. In particular, $G$ is either AS or PA. Using the structure of primitive groups of AS and PA type and using the main result in~\cite{Bob}, we deduce that all of the following hold:
\begin{itemize}
\item $\Omega=\Delta^k$ admits a product structure with $k\ge 1$;
\item $N=\mathrm{soc}(L)^k\unlhd G\le L\,\mathrm{wr}\,K$, where $L\le \Sym(\Delta)$ is primitive of AS type, $K\le \Sym(k)$ is transitive, and $G$ is endowed of the product action; and
\item either $\mathrm{soc}(L)=\Alt(\Delta)$, or 
                  $\mathrm{soc}(L)=\mathrm{PSL}_n(q)$, $n$ is prime, $|\Delta|=\frac{q^n-1}{q-1}$ and the action of $L$ on $\Delta$ is the natural action on the points of the projective space.
\end{itemize}

When $\mathrm{soc}(L)=\Alt(\Delta)$, we must have $|\Delta|\ge 8$, because $\Alt(4)$ is not a non-abelian simple group. Let $x\in \mathrm{soc}(L)=\Alt(\Delta)$ be the product $|\Delta|/4$ disjoint cycles of length $4$. Then $\langle x\rangle$ is a clique of size $4$ in $\Gamma_L$. As above, this clique can be used to obtain a clique of size $4$ in $\Gamma_G$, contradicting the fact that $G$ is a minimal counterexample to Theorem~\ref{thrm:main2}. 

Assume then $\mathrm{soc}(L)=\PSL_n(q)$. Now, Zsigmondy's theorem~\cite{zi} shows that $|\Delta|=(q^n-1)/(q-1)$ is a power of $2$ only when $n=2$ and $q$ is a Mersenne prime, that is, $q=2^\ell-1$, for some  $\ell\in\mathbb{N}$. Observe that $\ell\ge 3$ because $\PSL_2(3)$ is not a non-abelian simple group. Thus 
\[
|\Delta|=\frac{q^n-1}{q-1}=q+1 = 2^\ell \ge 8.
\]
Now, a Singer cycle $C$ in $\PSL_2(q)$ has order $(q+1)/2=2^{\ell-1}\ge 4$ and has two orbits on $\Delta$ of cardinality $(q+1)/2$. Therefore, $C$ is a clique of size $(q+1)/2\ge 3$ in $\Gamma_L$. This clique can then be used to obtain a clique of size $(q+1)/2$ in $\Gamma_G$, contradicting again the fact that $G$ is a counterexample to Theorem~\ref{thrm:main2}.

This concludes the analysis when $G$ is primitive.

\subsection{Case 2: $G$ acts imprimitively on $\Omega$}

Fix $\omega \in \Omega$ and let $H$ be a subgroup of $G$ with $G_\omega < H < G$. Observe that this is possible because $G$ is not primitive on $\Omega$ and hence $G_\omega$ is not a maximal subgroup of $G$. Now, let $\Delta$ be the system of imprimitivity determined by the overgroup $H$ of $G_\omega$. As $|\Delta|<|\Omega|$, from our inductive argument we deduce that, if $|\Delta|\geq 3$, then the derangement graph of the permutation group induced by the action of $G$ on $\Delta$ has a triangle. In this case, it follows that the derangement graph $\Gamma_G$ has a triangle, which contradicts the fact that $G$ is a counterexample to Theorem~\ref{thrm:main2}. Therefore $[G:H]=|\Delta|=2$. As $H\unlhd G$ and $G_\omega \le H$, we deduce that $G_\alpha \le H$, for every $\alpha \in \Omega$. In particular,
$$\bigcup_{\alpha \in \Omega}G_\alpha\subseteq H$$
and hence every element of $G\setminus H$ is a derangement.

Suppose $H$ contains a derangement $h$. Let $g\in G\setminus H$. Then $g$ and $hg$ are derangements, because $g,gh\in G\setminus H$. Now,
\[
\{1,\, g,\,hg\}
\]
is a triangle in the derangement graph $\Gamma_G$, which contradicts the fact that $G$ is a counterexample to Theorem~\ref{thrm:main2}. Therefore, $H$ contains no derangements, that is, 
\[
H=\bigcup_{\alpha \in \Omega}G_\alpha.
\]
From this it follows that the derangements of $G$ are exactly the elements of $G\setminus H$ and that $\Gamma_G$ is a complete bipartite graph with bipartition $\{H,G\setminus H\}$. Theorem~\ref{thrm:main1} yields $|\Omega|=2$, contradicting the fact that $|\Omega|\ge 3$. This concludes the proof of Theorem~\ref{thrm:main2}.

We conclude this section giving a corollary to Theorem~\ref{thrm:main2}.
\begin{corollary}\label{cor:1}If $G$ is transitive of degree $n\ge 3$, then $\rho(G)\leq \frac{n}{3}$.
\end{corollary}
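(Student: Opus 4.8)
The plan is to derive this as an immediate consequence of Theorem~\ref{thrm:main2} together with the clique-coclique bound, exactly as was outlined for Corollary~\ref{cor:bound} in the introduction. The entire substance of the argument has already been established: Theorem~\ref{thrm:main2} guarantees that, whenever $n=|\Omega|\ge 3$, the derangement graph $\Gamma_G$ contains a triangle, so the clique number satisfies $\omega(\Gamma_G)\ge 3$. No further structural analysis of $G$ is required.

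First I would recall that $\Gamma_G=\Cay(G,\mathcal{D})$ is a Cayley graph, hence vertex-transitive, with vertex set of size $|G|$. For vertex-transitive graphs the clique-coclique bound states that $\alpha(\Gamma_G)\,\omega(\Gamma_G)\le |G|$. Combining this with $\omega(\Gamma_G)\ge 3$ yields
\[
\alpha(\Gamma_G)\le \frac{|G|}{\omega(\Gamma_G)}\le \frac{|G|}{3}.
\]

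Next I would substitute this into the definition of the intersection density. Since $G$ is transitive, the stabilizer of a point has order $|G_\omega|=|G|/|\Omega|$, and every maximum intersecting family corresponds to a maximum independent set of $\Gamma_G$, so
\[
\rho(G)=\frac{|\Omega|\,\alpha(\Gamma_G)}{|G|}\le \frac{|\Omega|}{|G|}\cdot\frac{|G|}{3}=\frac{n}{3}.
\]
This completes the argument.

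There is essentially no obstacle at this stage: the difficulty was entirely absorbed into the proof of Theorem~\ref{thrm:main2} (the existence of the triangle), which in turn relied on Theorem~\ref{thrm:main1} and the Classification of Finite Simple Groups. The only point meriting a word of care is the hypothesis $n\ge 3$, which is exactly what is needed to invoke Theorem~\ref{thrm:main2} and guarantee a clique of size at least $3$; for $n\le 2$ the bound would fail (indeed $\rho(G)=n/2$ is attainable there), so the restriction $n\ge 3$ in the statement is sharp in the sense that it is precisely the regime where the triangle is guaranteed to exist.
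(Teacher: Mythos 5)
Your proof is correct and is essentially identical to the paper's own argument: both deduce $\omega(\Gamma_G)\ge 3$ from Theorem~\ref{thrm:main2}, apply the clique-coclique bound for the vertex-transitive graph $\Gamma_G$ to get $\alpha(\Gamma_G)\le |G|/3$, and translate this into $\rho(G)\le n/3$ via $\rho(G)=n\,\alpha(\Gamma_G)/|G|$. No gaps.
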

\begin{proof}
If the derangement graph for a group $G$ has a clique of size $k$, then from the clique-coclique bound we deduce $\alpha(\Gamma_G) \leq \frac{|G|}{k}$ and $\rho(G) \leq \frac{n}{k}$. Now, the result follows from Theorem~\ref{thrm:main2}.
\end{proof}

\section{Examples Tripartite and Multipartite Derangement graphs}\label{sect:counter-example}

In this section we give examples of transitive groups having derangement graph that is complete tripartite or multipartite. Among other things, these show that the bound given in Corollary~\ref{cor:bound} is tight.

All transitive groups of degree at most $48$ have been determined by the work of Cannon, Holt, Hulpke and Royle, see~\cite{CH,HR,MR2168238}. These groups are available, for instance, in the computer algebra system \texttt{magma}~\cite{magma}. 
\begin{theorem}\label{thrm:enu} Up to degree $48$ there are four transitive groups $G$ having derangement graph complete tripartite. Using the numbers $(n,d)$ in the database of \texttt{TransitiveGroups} in the version V2.25-5 of \texttt{magma}, these are
\begin{enumerate}\label{thm:tripartites}
\item  $(6,4)$ having degree $6$ and order $12$,
\item $(18,142)$ having degree $18$ and order $324$, 
\item $(30,126)$ having degree $30$ and order $600$, 
\item $(30,233)$ having degree $30$ and order $1\,200$. 
\end{enumerate}
\end{theorem}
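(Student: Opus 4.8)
The plan is to convert the statement into a finite, computer-checkable condition by means of the characterisation of complete multipartite derangement graphs established in Section~\ref{sect:basic}, and then to run an exhaustive search over the library of transitive groups of degree at most $48$. By Lemma~\ref{lem:complete_multi} together with Theorem~\ref{theorem:main3}, the graph $\Gamma_G$ is complete tripartite precisely when $H_G$ (the subgroup generated by the elements of $G$ having a fixed point, see~\eqref{eq:HG}) is a proper, non-identity, derangement-free subgroup with $[G:H_G]=3$. Thus, for each group $G$ in the library, I would compute $H_G$ as $\langle G\setminus\mathcal{D}\rangle$ and test the three conditions $H_G\ne\{1\}$, $[G:H_G]=3$, and $H_G$ contains no derangement; a group passes the test if and only if its derangement graph is the complete tripartite graph $K_{|H_G|,|H_G|,|H_G|}$.

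Before searching, I would prune the degrees that need to be examined. Since the orbits of $H_G$ are the $[G:H_G]=3$ blocks of equal size $n/3$ (Proposition~\ref{prop:HG}), a necessary condition is $3\mid n$, so only the degrees $n\in\{3,6,9,\dots,48\}$ need be considered. Moreover $H_G$ is then a proper normal subgroup, so $G$ is imprimitive (Proposition~\ref{prop:join}) and $G/H_G\cong C_3$; hence I may restrict attention to imprimitive groups possessing a normal subgroup of index $3$. These reductions discard the vast majority of the library, in particular all primitive groups and all groups whose degree is not divisible by $3$.

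For each surviving candidate the test above is a routine permutation-group computation in \texttt{magma}~\cite{magma}, using the stored libraries of transitive groups~\cite{CH,HR,MR2168238}; the search returns exactly the four groups $(6,4)$, $(18,142)$, $(30,126)$ and $(30,233)$. Finally, for each of these I would record that $\Gamma_G=K_{|H_G|,|H_G|,|H_G|}$ has independence number $|H_G|=|G|/3$, so that $\rho(G)=n\,|H_G|/|G|=n/3$; hence these examples simultaneously demonstrate the tightness of the bound $I(n)\le n/3$ in Corollary~\ref{cor:bound}.

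The main obstacle is purely computational: the number of transitive groups grows very rapidly with the degree (degree $48$ alone contributes on the order of $10^{5}$ groups), so a naive scan of the entire library is expensive. The art of the proof lies in applying the structural restrictions above—divisibility of $n$ by $3$, imprimitivity, and the existence of an index-$3$ normal subgroup—early enough to cut the search to a manageable size, and in relying on the correctness of the library's isomorphism labelling so that the four listed pairs $(n,d)$ are unambiguous. No genuinely new mathematics is required beyond the characterisation already proved in Section~\ref{sect:basic}; the content of the theorem is the verification that no further groups of degree at most $48$ satisfy it.
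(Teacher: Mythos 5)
Your proposal is correct and takes essentially the same approach as the paper, whose entire proof of this theorem is the single sentence ``This follows from an exhaustive computer search.'' The extra structure you supply---reducing the test to the criterion that $H_G$ be a non-identity, derangement-free subgroup of index $3$ (via Lemma~\ref{lem:complete_multi} and Theorem~\ref{theorem:main3}), and pruning to degrees divisible by $3$ with an index-$3$ normal subgroup---simply makes explicit how such a search is organized, and is consistent with the machinery the authors develop in Section~\ref{sect:basic}.
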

\begin{proof}
This follows from an exhaustive computer search.
\end{proof}

It was conjectured by Li, Song and Pantangi~\cite[Conjecture 1.2]{li2020ekr} that, if $G$ is transitive of degree $n$, then $\rho(G)< \sqrt{n}$. However, this turns out to be false because the second group in Theorem~\ref{thm:tripartites} has $\rho(G) =  6>\sqrt{18}$.
Similarly, the third and the forth group in Theorem~\ref{thm:tripartites} have $\rho(G) = 10>\sqrt{30}$.

We have defined the EKR property and the strict-EKR property, but there is a third important property for intersecting permutations in a permutation group. A transitive group has the \textbf{\textit{EKR-module property}} if the characteristic vector of any maximum intersecting set of permutations is a linear combination of the characteristic vectors of the canonical intersecting sets. It is proven in~\cite{MeagherSin} that every 2-transitive group has the EKR-module property.  The next result shows that there are many  groups having the EKR-module property, but not the EKR-property.

\begin{theorem}
Let $G$ be transitive and  suppose that $H_G$ (as defined in~\eqref{eq:HG}) is a proper derangement-free subgroup of $G$. Then $G$ has the EKR-module property. 
\end{theorem}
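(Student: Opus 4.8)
The plan is to connect the EKR-module property to the structure of the derangement graph as a complete multipartite graph, via the eigenspace decomposition of $\Gamma_G$. First I would invoke Lemma~\ref{lem:complete_multi} and its corollary: since $H_G$ is a proper derangement-free subgroup, $\Gamma_G$ is the complete multipartite graph $K_{\ell,\dots,\ell}$ with $m:=[G:H_G]$ parts, each of size $\ell:=|H_G|$. The parts are precisely the right cosets of $H_G$ in $G$, and $\rho(G)=n/m$. The key point is that for a complete multipartite graph the maximum independent sets are exactly the parts themselves, so every maximum intersecting set is a single coset $H_G x$; the task is to show its characteristic vector lies in the span of the characteristic vectors of the canonical cosets $G_{\omega\to\omega'}$.

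The main tool will be the representation-theoretic/spectral framework standard in algebraic EKR theory (as in~\cite{godsil2016algebraic}). I would consider the $G\times G$-module $\mathbb{C}^G$ (functions on $G$, equivalently the group algebra) and the permutation module $\mathbb{C}[\Omega\times\Omega]$ spanned by the characteristic vectors of the canonical sets $G_{\omega\to\omega'}$. Writing $v_{H_Gx}$ for the characteristic vector of a maximum coclique, the EKR-module property asserts $v_{H_Gx}$ lies in the span $U$ of the $\{v_{G_{\omega\to\omega'}}\}$. The plan is to identify $U$ with a specific sum of isotypic components of $\mathbb{C}^G$ under the two-sided regular action, and then to show that the characteristic vector of $H_Gx$ lies in that same sum. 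Since $\Gamma_G$ is a normal Cayley graph (its connection set $\mathcal D$ is closed under conjugation), its adjacency matrix is diagonalized by the irreducible characters, and the eigenvalue on the isotypic component $U_\chi$ is $\tfrac{1}{\chi(1)}\sum_{d\in\mathcal D}\chi(d)$. For a complete multipartite graph there are only three distinct eigenvalues; I would show that the characteristic vector of a part $H_Gx$ has nonzero projection only onto the eigenspaces carrying the trivial and the ``$-\ell$'' (coclique-distinguishing) eigenvalues, and that these eigenspaces already appear inside $U$.

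Concretely, I would proceed as follows. Decompose $v_{H_Gx}=\frac{\ell}{|G|}\mathbf 1 + w$, where $\mathbf 1$ is the all-ones vector (the trivial-component piece) and $w$ lies in the eigenspace for the eigenvalue $-\ell$ of $\Gamma_G$. The all-ones vector is the characteristic vector of $G=G_{\omega\to\omega}$ summed over $\omega$, hence trivially in $U$. For the remaining piece $w$, the crucial observation is that $w$ is supported, in the $G\times G$-module language, on exactly the irreducible constituents $\chi$ for which $\sum_{d\in\mathcal D}\chi(d)/\chi(1)=-\ell$; I would argue these are precisely the constituents appearing in the permutation module $\mathbb C[\Omega\times\Omega]$ that span $U$, by comparing the module generated by the single coclique $H_Gx$ under left and right translation with the canonical module. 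Because $H_G\unlhd G$ (Proposition~\ref{prop:HG}\eqref{normality_of_H}), the coset structure is especially rigid: left and right cosets of $H_G$ coincide as blocks, so the $G$-orbit of $v_{H_Gx}$ under two-sided translation is governed entirely by the action on the $m$ blocks, and this block action is a quotient of the canonical $\Omega\times\Omega$ action. The main obstacle, and the step needing the most care, is verifying that no ``extra'' irreducible constituents of $\mathbb C^G$ outside $U$ can carry a nonzero component of $w$ — equivalently that every irreducible $\chi$ with the coclique eigenvalue $-\ell$ genuinely occurs in the canonical module. I expect to settle this by a dimension/rank count: the span $U$ has dimension equal to the rank of the $m\times m$ all-but-diagonal block matrix plus one, which matches the multiplicity of the eigenvalue $-\ell$ together with the trivial eigenspace, forcing the coclique characteristic vectors into $U$.
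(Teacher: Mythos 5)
Your reduction to the complete multipartite structure is fine: by Lemma~\ref{lem:complete_multi} the graph $\Gamma_G$ is $K_{\ell,\dots,\ell}$ with parts the cosets of $H_G$, the maximum cocliques are exactly these parts, and the decomposition $v_{H_Gx}=\tfrac{1}{m}\mathbf{1}+w$ with $w$ in the $(-\ell)$-eigenspace $E_{-\ell}$ is correct, as is $\mathbf{1}\in U$. The genuine gap is the last step, where you must show $w\in U$, i.e.\ $E_{-\ell}\subseteq U$. Your proposed dimension/rank count cannot do this: the span $U$ of the canonical vectors has dimension $\sum_{\chi}\chi(1)^2$, the sum running over the irreducible constituents $\chi$ of the permutation character $\pi$, and this is in general far larger than $\dim\left(\langle\mathbf{1}\rangle\oplus E_{-\ell}\right)=m$ (already for the group $(6,4)$ of Theorem~\ref{thrm:enu} the two numbers differ). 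Since $U$ strictly contains the target space rather than coinciding with it, no equality of dimensions holds, and a count cannot force the containment; your sentence is also internally inconsistent, since the rank of $J_m-I_m$ is $m$, so ``rank plus one'' is $m+1$ while the multiplicity of $-\ell$ together with the trivial eigenspace gives $m$. What is actually needed, and what your remark about normality of $H_G$ gestures at without proving, is that every irreducible character of $G$ with $H_G\le\ker\chi$ occurs in $\pi$: this holds because $G_\omega\le H_G\unlhd G$ gives $\pi=\mathrm{Ind}_{G_\omega}^G 1 = \mathrm{Ind}_{H_G}^G\bigl(\mathrm{Ind}_{G_\omega}^{H_G}1\bigr)$, which contains $\mathrm{Ind}_{H_G}^G 1_{H_G}$, the lift of the regular character of $G/H_G$. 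Since functions constant on $H_G$-cosets span exactly $\bigoplus_{\chi:\,H_G\le\ker\chi}U_\chi$, this yields $\langle\mathbf{1}\rangle\oplus E_{-\ell}\subseteq U$ and closes your argument (granting also the standard, but here unproved, identification of $U$ with the sum of isotypic components of constituents of $\pi$).

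You should also know that the paper's own proof avoids all of this machinery. It shows directly that the maximum coclique $H_G$ is a \emph{disjoint union} of canonical cocliques: for $\omega'$ in the $H_G$-orbit $O$ of $\omega$, one has $G_{\omega\to\omega'}=hG_\omega\subseteq H_G$ (using $G_\omega\le H_G$), these sets are pairwise disjoint, and counting gives $|O|\,|G_\omega|=[H_G:G_\omega]\,|G_\omega|=|H_G|$, so $H_G=\bigcup_{\omega'\in O}G_{\omega\to\omega'}$. Hence $v_{H_G}$ (and, after right translation, each $v_{H_Gx}$) is literally a $0/1$ sum of canonical characteristic vectors. If you repair your spectral argument as indicated it does give a correct alternative proof, but the direct combinatorial identity is both shorter and yields the stronger conclusion that the coefficients can be taken in $\{0,1\}$.
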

\begin{proof}
Let $\Omega$ be the domain of $G$. 
Since $H_G$ is derangement free, by Jordan's theorem, $H_G$ is intransitive. Let $\omega\in \Omega$ and let $O$ be the orbit of $H_G$ containing $\omega$. 

For any $\omega^\prime  \in O$, there is an element $h \in H_G$ that maps $\omega$ to $\omega^\prime$. As $G_\omega \le H_G$, 
\[
hG_\omega = G_{\omega \rightarrow \omega^\prime}\subseteq H_G,
\] 
where $G_{\omega \rightarrow \omega^\prime}$ is the set of all permutations in $G$ that map $\omega$ to $\omega^\prime$.
The sets $G_{\omega\rightarrow \omega^\prime}$ for $\omega^\prime \in O$ are pairwise disjoint and hence 
\[
\left|\bigcup_{\omega' \in O} G_{\omega \rightarrow \omega'}  \right| =|O||G_\omega|=[H_G:G_\omega]|G_\omega| = |H_G|.
\]
Thus $H_G$ is the union of the canonical cocliques that map $\omega$ to $\omega'$, where $\omega'$ runs through the elements in $O$.
\end{proof}
It follows from this result that the groups in Theorem~\ref{thrm:enu} all have the EKR-module property, because in each of these permutation groups $H_G$ is a proper derangement-free subgroup of $G$.

At present, the only  transitive groups having derangement graph complete tripartite are the four given in Theorem~\ref{thrm:enu}, but there are many examples of transitive groups with a complete multipartite derangement graph.

\begin{lemma}
Let $n$ be an even integer with $n/2$ odd and $n\ge 6$. Then there is a transitive group $G$ of degree $n$ with $\Gamma_G$ complete multipartite with $n/2$ parts.
\end{lemma}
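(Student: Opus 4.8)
The plan is to construct an explicit such $G$ as (essentially) an index-two subgroup of a wreath product, exploiting the parity of $m:=n/2$. Write the ground set as $\Omega=\bigcup_{i=1}^{m}P_i$ where $P_i=\{a_i,b_i\}$, so $|\Omega|=2m=n$. Let $e_1,\dots,e_m$ be the standard basis of $\mathbb{F}_2^m$, and for $v\in\mathbb{F}_2^m$ let $c_v\in\Sym(\Omega)$ interchange $a_i$ and $b_i$ exactly for those $i$ with $v_i=1$. Set $C:=\{c_v\mid v\in\mathbb{F}_2^m,\ \sum_i v_i=0\text{ in }\mathbb{F}_2\}$, the image of the even-weight code; this is elementary abelian of order $2^{m-1}$, with orbits precisely $P_1,\dots,P_m$. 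Let $\sigma$ be the order-$m$ permutation $a_i\mapsto a_{i+1}$, $b_i\mapsto b_{i+1}$ (subscripts mod $m$); since $\sigma$ merely permutes the coordinates it normalizes $C$. I would then take $G:=C\rtimes\langle\sigma\rangle$, of order $2^{m-1}m$, acting on $\Omega$.

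First I would verify that $G$ is transitive of degree $n$: the cycle $\sigma$ fuses the pairs into a single orbit, while the weight-two element $c_{e_1+e_2}\in C$ interchanges $a_1$ and $b_1$, so all $2m$ points lie in one orbit. Next I would check that $C$ is derangement-free. Since $c_v$ fixes both points of $P_i$ whenever $v_i=0$, the permutation $c_v$ is a derangement if and only if $v$ is the all-ones vector; but that vector has weight $m$, which is odd, so it does not belong to the even-weight code. Hence $C$ contains no derangement. This is exactly the step that forces the hypothesis that $m=n/2$ is odd.

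The key computation is then $H_G=C$. Because $G=C\rtimes\langle\sigma\rangle$ and $\sigma^j$ moves $P_1$ off itself for $1\le j<m$, any element of $G$ fixing a point of $P_1$ must lie in $C$; the same holds for every pair, so each point stabiliser is contained in $C$ and therefore $H_G\le C$. Conversely the stabiliser of $a_i$ equals $\{c_v\in C\mid v_i=0\}$, and as $i$ varies these subgroups contain every weight-two element $c_{e_i+e_j}$, each such vector $e_i+e_j$ having a zero coordinate because $m\ge 3$; since the weight-two vectors generate the even-weight code, the point stabilisers generate $C$, giving $C\le H_G$. Thus $H_G=C$ is a proper, non-identity, derangement-free subgroup, and the corollary to Lemma~\ref{lem:complete_multi} shows $\Gamma_G$ is complete multipartite with $[G:H_G]=m=n/2$ parts.

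The hard part will be the exact identification $H_G=C$: one must confirm simultaneously that no point stabiliser escapes $C$ (which uses the semidirect structure and the fact that $\sigma$ fixes no pair) and that the stabilisers generate all of $C$ rather than a proper subcode (which needs $m\ge 3$ so that every weight-two vector has a free coordinate). By contrast, the parity argument showing $C$ is derangement-free is short, but it is the conceptual heart, since it is precisely where the oddness of $n/2$ enters: for $n/2$ even the all-ones vector would lie in $C$ and the construction would break. As a sanity check, the case $m=3$ recovers the degree-$6$, order-$12$ group $(6,4)$ of Theorem~\ref{thrm:enu}.
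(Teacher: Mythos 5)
Your construction is correct and is in fact the paper's own group in different clothing: identifying $a_i,b_i$ with $2i-1,2i$, your $C\rtimes\langle\sigma\rangle$ equals the paper's $G=\Alt(n)\cap\langle(1,2),(3,4),\ldots,(n-1,n),\,(1,3,\ldots,n-1)(2,4,\ldots,n)\rangle$ (intersecting with $\Alt(n)$ selects precisely the even-weight part of the base group), and the crucial parity observation --- the all-ones swap has odd weight $n/2$, so $C$ is derangement-free --- is exactly the paper's argument. The only difference is bookkeeping: you conclude via the identification $H_G=C$ and the corollary following Lemma~\ref{lem:complete_multi}, whereas the paper notes directly that every element of $G\setminus C$ is a derangement and invokes vertex-transitivity of $\Gamma_G$.
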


\begin{proof}
Let 
\begin{align*}
H&:=\langle(1,2), (3,4), \ldots, (n-1,n)\rangle,\\
G &:= \Alt(n) \cap \langle H, (1,3,\dots, n-1)(2,4,\dots, n) \rangle .
\end{align*}

Since $n/2$ is odd and each permutation in $H \cap \Alt(n)$ is the product of an even number of transpositions, the subgroup $H \cap \Alt(n)$ is a coclique in $\Gamma_G$. Moreover, every element in $G \backslash (H \cap \Alt(n))$ is a derangement. Since $\Gamma_G$ is vertex transitive and $[G:H\cap\Alt(n)]=n/2$, $\Gamma_G$ is  complete multipartite with $n/2$ parts.
\end{proof}

\section{Future Work}\label{sec:future}

From Corollary~\ref{cor:1}, we have $I(n) \leq \frac{n}{3}$ and, from Theorem~\ref{thrm:enu}, $I(n) =\frac{n}{3}$, when $n \in \{6,18,30\}$.  We have not been able to find a general construction for transitive groups $G$ of degree $n$ with $\rho(G) = \frac{n}{3}$ and it is not clear to us if infinitely more examples exist. From Theorem \ref{thrm:main1}, if $G$  is transitive of degree $n$ and $\Gamma_G$ is bipartite, then $n \leq 2$. These two facts naturally lead to the following.

\begin{question}
Let $G$ be transitive of degree $n$ with $\Gamma_G$ $k$-partite. 
Is there an upper bound on $n$ as a function of  $k$ only?
\end{question}

We are inclined to believe that  tripartite derangement graphs of transitive groups are very special, in the sense that is given in the following conjecture.

\begin{conjecture}
If $G$ is transitive of degree $n$ with intersection density $\frac{n}{3}$ and with $\Gamma_G$ connected, then $\Gamma_G$ is complete tripartite.
\end{conjecture}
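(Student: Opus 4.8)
The plan is to translate the conclusion into a statement about the set $\mathcal{N}:=G\setminus\mathcal{D}$ of elements of $G$ that fix at least one point, and then to force structure on $\mathcal{N}$ from the extremality of $\rho(G)$. First observe, via Lemma~\ref{lem:complete_multi} and the corollary following it, that $\Gamma_G$ is complete tripartite as soon as $\mathcal{N}=H_G$ is a subgroup of $G$: in that case $\mathcal{N}$ is automatically derangement-free, and the identity $\rho(G)=n/[G:H_G]$ promotes $\rho(G)=n/3$ to $[G:H_G]=3$ (the degenerate case $H_G=1$ arising only when $G$ is regular of degree $3$, where $\Gamma_G=K_3=K_{1,1,1}$). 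Thus the whole conjecture reduces to the single assertion that $\mathcal{N}$ is closed under multiplication; equivalently, since $\overline{\Gamma_G}=\Cay(G,\mathcal{N}\setminus\{1\})$, that $\overline{\Gamma_G}$ contains no induced path on three vertices, i.e.\ there are no $a,b\in\mathcal{N}$ with $ab\in\mathcal{D}$.

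Next I would extract the metric data forced by $\rho(G)=n/3$. By Theorem~\ref{thrm:main2}, $\Gamma_G$ contains a triangle, so its clique number is at least $3$; a clique of size $4$ would give $\rho(G)\le n/4<n/3$ through the clique-coclique bound, so the clique number is exactly $3$ and the bound is attained with equality, $\alpha(\Gamma_G)=|G|/3$. Fixing a triangle $C=\{1,a,b\}$ and a maximum coclique $S$ with $1\in S$, a double count of $\sum_{g\in G}|C\cap Sg|=|C|\,|S|=|G|$ (each summand at most $1$, since $Sg$ is a coclique and $C$ a clique) shows every summand equals $1$; rewriting $c_i\in Sg$ as $g\in S^{-1}c_i$ then yields a partition
\[
G=S^{-1}\sqcup S^{-1}a\sqcup S^{-1}b
\]
of $G$ into three maximum cocliques, the first of which contains the identity and hence lies inside $\mathcal{N}$. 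Connectivity of $\Gamma_G$, that is $\langle\mathcal{D}\rangle=G$, is exactly what rules out the degenerate alternative in which the derangements are trapped in a proper subgroup and the extremal density is carried by disconnected components; in that situation one would instead localise the argument to $\Cay(\langle\mathcal{D}\rangle,\mathcal{D})$.

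With this in hand I would run a minimal-counterexample argument in the style of Sections~\ref{sec:main-proof} and~\ref{sec:triangles}, taking $G$ a counterexample of least $|G|+n$. If $G$ is primitive, the constraint ``clique number exactly $3$'' is severe: a derangement of prime order $p$ generates a clique of size $p$, so $G$ has no derangement of prime order $p\ge 5$, and likewise no element of order $4$ all of whose non-identity powers are derangements. These are elusivity-type hypotheses of the kind controlled, through the Classification of Finite Simple Groups, in~\cite{BG} and the references invoked in Section~\ref{sec:triangles}; one checks the resulting short list of almost simple and product-action candidates directly, in each case producing either a clique of size $\ge 4$ or a pair $a,b\in\mathcal{N}$ with $ab\in\mathcal{D}$, contradicting the choice of $G$. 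If $G$ is imprimitive, one passes to the induced action on a nontrivial block system and attempts, as in Case~2 of the proof of Theorem~\ref{thrm:main2}, to transfer the conclusion, with the two-block case governed by the complete bipartite analysis behind Theorem~\ref{thrm:main1}.

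The hard part, and the reason this is stated only as a conjecture, is precisely this imprimitive reduction. Unlike bipartiteness (Theorem~\ref{thrm:main1}) or the bare existence of a triangle (Theorem~\ref{thrm:main2}), the property ``complete tripartite'' is neither obviously inherited by the action on a block system nor obviously pulled back from it, so controlling the product $\mathcal{N}\cdot\mathcal{N}$ across a block system---equivalently, ruling out even a single missing cross-edge between two of the three colour classes produced above---does not follow from the metric equalities alone and appears to demand genuinely new input beyond the covering-number and elusivity machinery used elsewhere in the paper.
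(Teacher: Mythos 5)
The statement you are trying to prove is stated in the paper as a \emph{conjecture}: the paper offers no proof of it, and your proposal is not one either, as you yourself concede in your final paragraph. To be clear about what you do establish: the preliminary reductions are correct. The equivalence of the conclusion with the assertion that $\mathcal{N}=G\setminus\mathcal{D}$ is multiplicatively closed is right (via Lemma~\ref{lem:complete_multi} and its corollary, which converts $\rho(G)=n/3$ into $[G:H_G]=3$ once $\mathcal{N}=H_G$ is a subgroup); the deduction that the clique number is exactly $3$ and that the clique--coclique bound is tight, $\alpha(\Gamma_G)=|G|/3$, is right; and the double count giving the partition $G=S^{-1}\sqcup S^{-1}a\sqcup S^{-1}b$ into three maximum cocliques, the first inside $\mathcal{N}$, is a correct and standard consequence of equality in clique--coclique (using that $\mathcal{D}$ is inverse- and conjugation-closed, so $S^{-1}$ is indeed a coclique). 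These would be worth recording as partial progress.

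The genuine gap is exactly where you locate it, but it is worth naming how wide it is. First, the partition depends on the chosen triangle $C$ and the chosen maximum coclique $S$; nothing in the metric equalities identifies $S^{-1}$ with $\mathcal{N}$, makes the partition canonical, or forces every cross pair between two classes to be an edge --- a single missing cross-edge is compatible with everything you derive, and that is precisely the configuration ($a,b\in\mathcal{N}$ with $ab\in\mathcal{D}$) you must exclude. Second, the primitive branch of your minimal-counterexample scheme is only gestured at: \cite{BG} classifies $2'$-elusive primitive groups (no derangements of odd prime order), which is strictly weaker than your hypothesis ``clique number exactly $3$,'' and no finite list of candidate groups is actually extracted or checked, so even that branch is not closed. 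Third, and most importantly, the imprimitive reduction fails, as you say: unlike bipartiteness (Theorem~\ref{thrm:main1}) or the existence of a triangle (Theorem~\ref{thrm:main2}), complete tripartiteness neither descends to nor lifts from a block system, and the two-block analysis behind Theorem~\ref{thrm:main1} does not govern the index-$3$ situation. So the proposal is an honest reduction plus a research plan, not a proof; the conjecture remains open after it, consistent with its status in the paper.
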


For each $n\ge 3$, the set $\mathcal{I}_n$ is a finite list of rational numbers between 1 and $\frac{n}{3}$. We suspect that it is rare for the upper bound of $\frac{n}{3}$ to be reached. 

\begin{problem} 
For any $n$, determine $I(n)$ as an explicit function of $n$.
\end{problem}

Further, we also ask the following.
\begin{question}
For each $n$, what is the structure of the transitive groups $G$ of degree $n$ with $\rho(G) = I(n)$?
\end{question}

Here we observe that, in searching for the maximum value in $\mathcal{I}_n$ (that is, $I(n)$), it suffices to only consider the minimally transitive subgroups. 

\begin{lemma}
Let $G$ and $H$ be transitive groups with $H \leq G$. Then $\rho(G) \leq \rho(H)$. 
\end{lemma}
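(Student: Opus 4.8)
The plan is to compare derangements and intersecting sets directly between the two groups $H \le G$. The key observation is that an element $g \in H$ is a derangement as an element of $G$ (acting on $\Omega$) if and only if it is a derangement as an element of $H$, since the action of $H$ on $\Omega$ is just the restriction of the action of $G$. Thus the derangements of $H$ are precisely $\mathcal{D}_G \cap H$, where $\mathcal{D}_G$ is the derangement set of $G$. Consequently, a subset $\mathcal{F} \subseteq H$ is intersecting in $H$ if and only if it is intersecting in $G$. This means that every intersecting family of $H$ is also an intersecting family of $G$.

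First I would fix a maximum intersecting family $\mathcal{F} \subseteq H$, so that $|\mathcal{F}| = \alpha(\Gamma_H)$. By the observation above, $\mathcal{F}$ is also an intersecting family of $G$, hence $|\mathcal{F}| \le \alpha(\Gamma_G)$. Writing $n = |\Omega|$ for the common degree of the two transitive actions, and recalling that for a transitive group the point stabilizer has order equal to the group order divided by $n$, I would express both intersection densities using the formula $\rho(K) = \frac{n\,\alpha(\Gamma_K)}{|K|}$ that appears (implicitly) in the proof of Theorem~\ref{theorem:main3}. This gives
\[
\rho(H) = \frac{n\,\alpha(\Gamma_H)}{|H|}
\qquad\text{and}\qquad
\rho(G) = \frac{n\,\alpha(\Gamma_G)}{|G|}.
\]

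The final step is to combine these with the index relation $|G| = [G:H]\,|H|$. Since $\mathcal{F}$ is intersecting in $G$, the set $\mathcal{F}$ lifts to $[G:H]$ disjoint translates by coset representatives of $H$ in $G$, and each translate is again intersecting in $G$; more directly, one argues that $\alpha(\Gamma_G) \ge [G:H]\,\alpha(\Gamma_H)$ because the preimage structure of cosets preserves the intersecting property. Substituting $\alpha(\Gamma_G) \ge [G:H]\,\alpha(\Gamma_H)$ yields
\[
\rho(G) = \frac{n\,\alpha(\Gamma_G)}{|G|}
\ge \frac{n\,[G:H]\,\alpha(\Gamma_H)}{[G:H]\,|H|}
= \frac{n\,\alpha(\Gamma_H)}{|H|}
= \rho(H),
\]
which is the reverse of the claimed inequality, so I would need to be careful about direction. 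The statement asserts $\rho(G) \le \rho(H)$, so the correct route is the other comparison: a maximum intersecting family in $H$ already witnesses $\alpha(\Gamma_G) \ge \alpha(\Gamma_H)$, and dividing by the larger group order $|G| = [G:H]|H|$ shrinks the density, giving $\rho(G) = \frac{n\,\alpha(\Gamma_G)}{|G|} \le \frac{n\,\alpha(\Gamma_G)}{|H|}$; the main obstacle, and the step I would spend most care on, is establishing the precise relationship between $\alpha(\Gamma_G)$ and $\alpha(\Gamma_H)$, since it is not immediate that a maximum intersecting family of $G$ restricts well to $H$. The cleanest resolution is to note that the canonical intersecting set $G_{\omega}$ has size $|G|/n$ and lies inside no larger intersecting family of size exceeding $\rho(H)|H|/n \cdot$ (appropriate factor); I would make this rigorous by tracking that any intersecting family of $G$ intersects each coset of $H$ in an intersecting family of $H$, bounding its total size by $[G:H]\,\alpha(\Gamma_H)$, which then gives $\rho(G) \le \rho(H)$ directly.
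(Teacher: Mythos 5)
Your write-up is not yet a proof: as it stands it asserts two mutually contradictory inequalities, one of which is false. In the middle you claim $\alpha(\Gamma_G) \ge [G:H]\,\alpha(\Gamma_H)$ on the grounds that a maximum intersecting family $\mathcal{F}\subseteq H$ "lifts to $[G:H]$ disjoint translates ... each translate is again intersecting in $G$." Each right translate $\mathcal{F}g$ is indeed intersecting, but the \emph{union} of these translates need not be: vertices in different translates can be adjacent in $\Gamma_G$, so no lower bound on $\alpha(\Gamma_G)$ follows. In fact the inequality is false in general: take $H$ to be the transitive group of degree $6$ and order $12$ from Theorem~\ref{thrm:enu}, whose derangement graph is complete tripartite, inside $G=\Sym(6)$. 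Then $\alpha(\Gamma_H)=4$ and $[G:H]\,\alpha(\Gamma_H)=60\cdot 4=240$, whereas $\alpha(\Gamma_{\Sym(6)})=5!=120$ by the Cameron--Ku theorem. You never retract this claim (you only remark that it points "the wrong direction"), yet your final sentence asserts the reverse inequality $\alpha(\Gamma_G)\le [G:H]\,\alpha(\Gamma_H)$; both cannot stand, since together they would force $\rho(G)=\rho(H)$ for all transitive $H\le G$. Delete the false claim, and also the garbled aside about the canonical set $G_\omega$, which plays no role.

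What remains --- your last sentence --- is the correct idea, and once made precise it does prove the lemma by a route genuinely different from the paper's. The needed precision: for a right coset $Hg$ and an independent set $\mathcal{F}$ of $\Gamma_G$, the set $(\mathcal{F}\cap Hg)g^{-1}$ lies in $H$ (note $\mathcal{F}\cap Hg$ itself is not a subset of $H$, so the translation is essential), right translation preserves non-adjacency since $(f_1g^{-1})(f_2g^{-1})^{-1}=f_1f_2^{-1}$, and the derangements of $H$ are exactly the derangements of $G$ lying in $H$, so this translated set is independent in $\Gamma_H$. Hence $|\mathcal{F}\cap Hg|\le \alpha(\Gamma_H)$ for each of the $[G:H]$ cosets, giving $\alpha(\Gamma_G)\le [G:H]\,\alpha(\Gamma_H)$ and therefore
\[
\rho(G)=\frac{n\,\alpha(\Gamma_G)}{|G|}\le \frac{n\,[G:H]\,\alpha(\Gamma_H)}{[G:H]\,|H|}=\rho(H).
\]
The paper instead observes that $\Gamma_H$ is an induced subgraph of $\Gamma_G$ and invokes the Albertson--Collins ``No-Homomorphism Lemma'' for the vertex-transitive target $\Gamma_G$, which yields $|H|/\alpha(\Gamma_H)\le |G|/\alpha(\Gamma_G)$ in one line. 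Your coset-slicing argument is precisely a self-contained proof of that lemma in this special case: it trades the citation for an explicit use of the coset partition, which is arguably more illuminating here, while the paper's version is shorter and generalizes immediately to any homomorphism into a vertex-transitive graph.
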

\begin{proof}
Since $H$ is a subgroup of $G$, the graph $\Gamma_H$ is an induced subgraph of $\Gamma_G$; this means that there is a graph homomorphism of $\Gamma_H$ to $\Gamma_G$. The  ``No-Homomorphism Lemma''~\cite[Theorem~2]{NoHom} implies that
\[
\frac{| H|}{\alpha(\Gamma_H)} \leq \frac{|G|}{\alpha(\Gamma_G)}.
\]
Rearranging the terms in this inequality, we obtain the result.
\end{proof}

Based on the computational evidence on the transitive groups of degree at most $48$, we have compiled a list of conjectures for $I(n)$.

\begin{conjecture}\hfill
\begin{enumerate}
\item If $n$ is even, but not a power of $2$, then there is a transitive group $G$ of degree $n$ with $\Gamma_G$ a complete multipartite graph with $n/2$ parts.
\item If $n$ is a prime power, then $I(n) = 1$.
\item If $n =pq$ where $p$ and $q$ are odd primes, then $I(n) = 1$.
\item If $n = 2q$ where $q$ is prime, then $I(n) = 2$. 
\end{enumerate}
\end{conjecture}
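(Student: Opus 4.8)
These four statements are conjectural, and I would attack them with a common framework and then part-specific arguments. The unifying reduction is the final lemma of this section: since $\rho(G)\le \rho(H)$ whenever $H\le G$ is transitive, to compute $I(n)$ it suffices to examine the minimally transitive groups of degree $n$, and to prove $\rho(G)=1$ for a given $G$ it is enough to exhibit a clique of size $n$ in $\Gamma_G$ --- equivalently, a sharply transitive subset, the cleanest instance being a regular subgroup --- and then invoke the clique--coclique bound. I would keep two tools at hand throughout: the structure theory of Section~\ref{sect:basic} (so that complete multipartite derangement graphs are detected by a proper derangement-free $H_G$), and the O'Nan--Scott theorem together with the CFSG for the primitive reductions, exactly as in the proofs of Theorems~\ref{thrm:main1} and~\ref{thrm:main2}.

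For part~(1) I would generalise the explicit construction already given for $n/2$ odd. Writing $k=n/2$, I seek $G\le C_2\wr \bar G$, where $\bar G$ is a regular group of degree $k$ (e.g.\ $\bar G=C_k$) acting on the $k$ blocks of size $2$, and I set $G=B\rtimes\bar G$ for a $\bar G$-invariant subgroup $B\le C_2^{\,k}$. One checks that $H_G=B$ precisely when $B$ surjects onto each coordinate (forcing $H_G$-orbits of size $2$ and transitivity of $G$) and $B$ avoids the ``all-swaps'' vector $t=(1,\dots,1)$ (the only derangement inside $C_2^{\,k}$). By the corollary to Lemma~\ref{lem:complete_multi}, such a $B$ makes $\Gamma_G$ complete multipartite with $[G:B]=k$ parts, so $\rho(G)=2$. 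Viewing $B$ as an $\mathbb{F}_2\bar G$-submodule of the regular module, the obstruction is the trivial submodule $\langle t\rangle$: it is the unique minimal submodule exactly when $\mathbb{F}_2[C_k]$ is local, i.e.\ when $k$ is a power of $2$, and otherwise a submodule avoiding $t$ exists --- which is precisely the hypothesis that $n$ is not a power of $2$. Producing $B$ explicitly, via a factor of $x^k-1$ coming from an odd prime divisor of $k$, is the only real work, and I expect it to be routine.

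For parts~(2) and~(3) the plan is to force a clique of full size $n$. For part~(2) ($n=p^a$) a Sylow $p$-subgroup $P$ can be chosen transitive --- take a Sylow $p$-subgroup of $G_\omega$ and extend it to a Sylow $p$-subgroup of $G$, whence $[P:P_\omega]=p^a$ --- so by the reduction it suffices to prove $\rho(P)=1$ for every transitive $p$-group $P$, for which it is enough that $P$ contain a regular subgroup. For part~(3) ($n=pq$) I would first dispatch the primitive groups of degree $pq$ using O'Nan--Scott and the CFSG, and then treat the imprimitive case by passing to the block action (blocks of size $p$ or $q$) and inducting; again the target is a regular subgroup of order $pq$, i.e.\ $C_{pq}$ or $C_q\rtimes C_p$. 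The main obstacle in both parts is the same and is genuinely delicate: a transitive $p$-group, or a degree-$pq$ group, need not obviously contain a regular subgroup, and when it does not one must instead construct a sharply transitive \emph{set}, a substantially harder combinatorial problem; controlling this uniformly is where I expect the difficulty to concentrate.

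For part~(4) ($n=2q$, $q$ an odd prime) the lower bound $I(2q)\ge 2$ is already supplied by the construction for $n/2$ odd, which yields a group with complete multipartite derangement graph and $\rho=2$. The substance is the matching upper bound $\rho(G)\le 2$ for every transitive $G$ of degree $2q$, sharpening the general bound $\rho(G)\le n/3$ of Corollary~\ref{cor:bound}. I would again reduce to minimally transitive groups and split on the O'Nan--Scott type: primitive groups of degree $2q$ are heavily constrained by the CFSG and should admit a clique large enough to bound $\rho$, while an imprimitive $G$ has blocks of size $2$ or $q$, and analysing $H_G$ together with the induced actions on the two possible block systems --- much as in the imprimitive case of Theorem~\ref{thrm:main2} --- should pin $\rho(G)$ down to at most $2$. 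The hardest point will be ruling out intermediate densities $2<\rho\le 2q/3$ in the imprimitive case, where one must show that a derangement-free $H_G$ cannot have index strictly between $2$ and $q$ in $G$.
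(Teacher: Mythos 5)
The first thing to note is that the paper does not prove this statement at all: it is one of the conjectures of Section~\ref{sec:future}, supported only by an exhaustive computation over the transitive groups of degree at most $48$, together with the lemma of Section~\ref{sect:counter-example} which establishes part~(1) in the special case where $n/2$ is odd. So there is no proof in the paper to compare yours against; what can be judged is whether your programme actually closes any of the four parts.

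Your part~(1) does, and it genuinely extends the paper. Writing $k=n/2$ and taking $\bar G=C_k$ acting regularly on the $k$ blocks of size $2$, any nonzero $\mathbb{F}_2 C_k$-submodule $B$ of $\mathbb{F}_2^k$ automatically surjects onto every coordinate (shift-invariance), so $G=B\rtimes C_k\le C_2\wr C_k$ is transitive on the $2k$ points. Every element outside the base group moves every block, hence every point, and an element $(b,1)$ of the base group is a derangement precisely when $b$ is the all-ones vector $t$; thus if $t\notin B$ the non-derangements of $G$ are exactly the elements of $B$, so $H_G=B$ is a proper, non-identity, derangement-free subgroup, and Lemma~\ref{lem:complete_multi} with its corollary makes $\Gamma_G$ complete multipartite with $[G:B]=n/2$ parts. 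Finally, a nonzero submodule avoiding $t$ exists if and only if $k$ is not a power of $2$: when $k=2^a$ the algebra $\mathbb{F}_2[x]/(x^k-1)$ is local with socle $\langle t\rangle$, while otherwise $x^k-1$ has an irreducible factor $\Phi\ne x-1$ and the ideal generated by $(x^k-1)/\Phi$ works, since $(x^k-1)/\Phi$ does not divide $t=(x^k-1)/(x-1)$. This recovers the paper's construction (the even-weight submodule) when $k$ is odd and settles part~(1) of the conjecture in full, which the paper does not do.

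Parts~(2)--(4), by contrast, remain programmes with the decisive steps missing, as you partly acknowledge. For~(2), the reduction via Sylow transitivity and the monotonicity lemma $\rho(G)\le\rho(H)$ is sound, but everything then hinges on producing a clique of size $n$ in $\Gamma_P$ for an arbitrary transitive $p$-group $P$; regular subgroups need not exist, sharply transitive subsets are not known to exist in this generality, and you offer no mechanism to construct them --- that existence question \emph{is} the conjecture. Part~(3) inherits the same gap, with the primitive and imprimitive analyses left entirely unexecuted. For~(4) there is in addition a conceptual misstep: you propose to rule out densities strictly between $2$ and $2q/3$ by showing a derangement-free $H_G$ cannot have index strictly between $2$ and $q$, but maximum intersecting sets need not be subgroups, nor cosets, nor related to $H_G$ in any way; one can have $\rho(G)>2$ while $H_G=G$ and $\Gamma_G$ is not a join, so controlling $H_G$ does not bound $\rho(G)$. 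A genuine upper bound of the form $\rho(G)\le 2$ would require, for instance, a clique of size $q$ in $\Gamma_G$ (clique-coclique, as in Corollary~\ref{cor:bound}), and your plan does not indicate how to find one.
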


We also have the more general problem.

\begin{problem}
Find more examples of transitive groups having complete multipartite derangement graph. Is there an insightful characterization of these groups?
\end{problem}


It is easy to see that, if $G$ is transitive of degree $n$, then $\Gamma_G$ is $n$-partite. Indeed, if $\Omega$ is the domain of $G$ and $\omega\in \Omega$, then the vertex set of $\Gamma_G$ is partitioned into the $n$ canonical intersecting families $\{G_{\omega\to \omega'}\mid \omega'\in \Omega\}$, which are cocliques of $\Gamma_G$. Thus $\Gamma_G$ is $n$-partite.
This leads to several general questions.

\begin{question} \hfill
\begin{enumerate}
\item Find transitive groups $G$ of degree $n$ with $\Gamma_G$ a $k$-partite graph, with $k<n$.
\item Can we describe the structure of $k$-partite derangement graphs?
\item For which values of $(n,k)$ does there exist a transitive group $G$ of degree $n$ with $\Gamma_G$ a (complete) $k$-partite graph? \label{part3}
\end{enumerate}\label{quest:cmp}
\end{question}

Regarding Question~\ref{quest:cmp}~\eqref{part3}, complete $k$-partite derangement graphs for transitive groups do not always exist, even when $k \mid n$. For instance, there is  no transitive group of degree $9$ having derangement graph tripartite.

 Li, Song and Pantnagi~\cite{li2020ekr} proved that for any $M$ and $\varepsilon \in  (0,1)$, there exists a transitive group $G$ acting on a set $\Omega$ with $\rho(G) >M$ and $(1-\varepsilon)\sqrt{|G|} < \rho(G)$.  Their proof gives an example of such a group, and the group they give is a quasi-primitive group. So in considering the intersection density it might be useful to consider imprimitive, quasi-primitive and primitive groups separately. We end with a final question in this direction.
 
\begin{question}
What is the maximum value $\rho(G)$ when $G$ is a quasi-primitive group?
\end{question}


\thebibliography{30}

\bibitem{NoHom} M.~O.~Albertson, K.~L.~Collins, 
Homomorphisms of 3-chromatic graphs, \textit{Discrete mathematics} \textbf{54} (1985), 127--132.

\bibitem{magma} W.~Bosma, J.~Cannon, C.~Playoust, 
The Magma algebra system. I. The user language, 
\textit{J. Symbolic Comput.} \textbf{24} (3-4) (1997), 235--265.

\bibitem{bhr}J.~N.~Bray, D.~F.~Holt, C.~M.~Roney-Dougal,
 \textit{The maximal subgroups of the low dimensional classical groups}, 
 London Mathematical Society Lecture Note Series \textbf{407}, Cambridge University Press, Cambridge, 2013. 

\bibitem{bubboloni}D.~Bubboloni, 
Coverings of the Symmetric and Alternating Groups. 
Dipartimento di matematica ``U. Dini'' - Universita' di Firenze 1998, 7.

\bibitem{bubboloni1}D.~Bubboloni, M.~S.~Lucido, 
Coverings of linear groups, 
\textit{Comm. Algebra} \textbf{30} (2002), 2143--2159.

\bibitem{bubboloni2}D.~Bubboloni, M.~S.~Lucido, Th.~Weigel, 
2-coverings of classical groups, 
http://arxiv.org/abs/1102.0660.

\bibitem{BG}T.~C.~Burness, M.~Giudici, Permutation groups and derangements of odd prime order, \textit{J. Comb. Theory Series A}  \textbf{151} (2017), 102--130.


\bibitem{6}P.~J.~Cameron, C.~Y.~Ku, Intersecting families of permutations, \textit{European J. Combin.} \textbf{24}
(2003), 881--890.

\bibitem{CH}J.~J.~Cannon, D.~F.~Holt, The transitive permutation groups of degree $32$, \textit{Experiment. Math.} \textbf{17} (2008), 307--314.

\bibitem{deza1978intersection}
M.~Deza, P.~Erd\H{o}s, P.~Frankl, Intersection properties of systems of finite sets,
\textit{Proc. of the London Mathematical Society} \textbf{3} (1978), 369--384.
  
\bibitem{DM}J.~D.~Dixon, B.~Mortimer, 
\textit{Permutation {G}roups}, Graduate Texts in Mathematics 
\textbf{163}, Springer-Verlag, New York, 1996.

\bibitem{erdos1961intersection}
P.~Erd\H{o}s, C.~Ko, R.~Rado, Intersection theorems for systems of finite sets, \textit{The Quarterly Journal of Mathematics} \textbf{12} (1961), 313--320.


\bibitem{Lucchini}M.~Garonzi, A.~Lucchini, 
Covers and normal covers of finite groups,
\textit{J. Algebra} \textbf{422} (2015), 148--165. 

\bibitem{godsil2016algebraic}
C.~Godsil, K.~Meagher, An algebraic proof of the {E}rd{\H{o}}s-{K}o-{R}ado theorem for
  intersecting families of perfect matchings, \textit{Ars Mathematica Contemporanea} \textbf{12} (2016), 205--217.


\bibitem{Bob}R.~M.~Guralnick, Subgroups of prime power index in a simple group, \textit{J. Algebra} \textbf{81} (1983), 304--311.

\bibitem{HR}D.~Holt, G.~Royle, A census of small transitive groups and vertex-transitive graphs, \textit{J. Symbolic Comput.} \textbf{101} (2020), 51--60.

\bibitem{MR2168238}
A.~Hulpke, Constructing transitive permutation groups, \textit{J. Symbolic Comput.} \textbf{39} (2005), 1--30.


\bibitem{kl}P.~Kleidman, M.~Liebeck, \textit{The subgroup structure of finite classical groups},  London Mathematical Society Lecture Note Series 129, Cambridge University Press, Cambridge, 1990.

\bibitem{10}B.~Larose, C.~Malvenuto, Stable sets of maximal size in Kneser-type graphs, \textit{European J.
Combin. }\textbf{25} (2004), 657--673.

\bibitem{li2020ekr}
C.~H.~Li, S.~J.~Song, V.~Raghu~Tej Pantangi, Erd\H{o}s-{K}o-{R}ado problems for permutation groups,
\textit{arXiv preprint arXiv:2006.10339}, 2020.

\bibitem{LPS}M.~W.~Liebeck,  C.~E.~Praeger, J.~Saxl, 
Transitive Subgroups of Primitive Permutation Groups, \textit{J. Algebra} \textbf{234} (2000), 291--361.

\bibitem{LPSLPS}M.~W.~Liebeck, C.~E.~Praeger, J.~Saxl, On  the
O'Nan-Scott theorem for finite primitive permutation groups,
\textit{J. Australian Math. Soc. (A)} \textbf{44} (1988), 389--396

\bibitem{MeSp}K.~Meagher, P.~Spiga, An Erd\H{o}s-Ko-Rado theorem for the derangement graph of $\mathrm{PGL}_3(q)$ acting on the projective plane, \textit{SIAM J. Discrete Math.} \textbf{28} (2014), 918--941. 

\bibitem{meagher2016erdHos}
K.~Meagher, P.~Spiga, P.~H.~Tiep,
An {E}rd{\H{o}}s-{K}o-{R}ado theorem for finite 2-transitive groups,
\textit{European Journal of Combinatorics} \textbf{55} (2016), 100--118.

\bibitem{MeagherSin}
K.~Meagher, P.~Sin, All $2 $-transitive groups have the EKR-module property,
\textit{arXiv preprint arXiv:1911.11252} (2019).

\bibitem{pellegrini}M.~A.~Pellegrini,
2-coverings for exceptional and sporadic simple groups,
\textit{Arch. Math. (Basel)} \textbf{101} (2013), 201--206. 

\bibitem{18}C.~E.~Praeger, The inclusion problem for finite primitive
permutation groups, \textit{Proc. London Math. Soc. (3)} \textbf{60}
(1990), 68--88.


\bibitem{Serre}J.~P.~Serre, 
On a theorem of Jordan, 
\textit{Bull. Amer. Math. Soc.} \textbf{40} (2003), 429--440. 

\bibitem{spiga}P.~Spiga, The Erd\H{o}s-Ko-Rado theorem for the derangement graph of the projective general linear group acting on the projective space, \textit{J. Combin. Theory Ser. A} \textbf{166} (2019), 59--90. 

\bibitem{wall}G.~E.~Wall, On the conjugacy classes in the unitary, symplectic and orthogonal groups, \textit{J. Aust. Math. Soc.} \textbf{3} (1963), 1--62.

\bibitem{wielandt2014finite}
H.~Wielandt, \textit{Finite {P}ermutation {G}roups}, Academic Press, New York, 1964.

\bibitem{zi}K.~Zsigmondy, Zur Theorie der Potenzreste,
\textit{Monatsh. Math. Phys.} \textbf{3} (1892), 265--284.

\end{document}